\documentclass[a4paper]{amsart}

\usepackage[english]{babel}

\usepackage{thmtools}
\usepackage{amsmath}
\usepackage{amssymb}
\usepackage{amsthm}
\usepackage{amsfonts}
\usepackage{graphicx}
\usepackage{comment}
\usepackage{enumitem}
\usepackage{color}
\usepackage[colorlinks=true, allcolors=blue]{hyperref}
\usepackage{cleveref}
\usepackage{todonotes}
\usepackage{oldgerm}
\usepackage{longtable}
\usepackage{tikz}

\declaretheorem[name=Theorem,numberwithin=section]{thm}

\newtheorem{Lemma}[thm]{Lemma}

\newtheorem{claim}{Claim}[thm]
\newtheorem*{claim*}{Claim}

\theoremstyle{definition}
\newtheorem{defn}[thm]{Definition}
\newtheorem{Def}[thm]{Definition}

\newtheorem{fact_n_def}[thm]{Fact and Definition}
\newtheorem{Fact}[thm]{Fact}

\newcommand{\subd}{\mathrel{\triangleleft}}

\newcommand{\Lk}{\operatorname{Lk}}

\renewcommand{\mid}{\,:\,}
\newenvironment{enumerate-(a)}{\begin{enumerate}[label={\upshape (\alph*)}, leftmargin=2pc]}{\end{enumerate}}
\newenvironment{enumerate-(a)-r}{\begin{enumerate}[label={\upshape (\alph*)}, leftmargin=2pc,resume]}{\end{enumerate}}
\newenvironment{enumerate-(a)-5}{\begin{enumerate}[label={\upshape (\alph*)}, leftmargin=2pc,start=5]}{\end{enumerate}}
\newenvironment{enumerate-(A)}{\begin{enumerate}[label={\upshape (\Alph*)}, leftmargin=2pc]}{\end{enumerate}}
\newenvironment{enumerate-(A)-r}{\begin{enumerate}[label={\upshape (\Alph*)}, leftmargin=2pc,resume]}{\end{enumerate}}
\newenvironment{enumerate-(i)}{\begin{enumerate}[label={\upshape (\roman*)}, leftmargin=2pc]}{\end{enumerate}}
\newenvironment{enumerate-(i)-r}{\begin{enumerate}[label={\upshape (\roman*)}, leftmargin=2pc,resume]}{\end{enumerate}}
\newenvironment{enumerate-(I)}{\begin{enumerate}[label={\upshape (\Roman*)}, leftmargin=2pc]}{\end{enumerate}}
\newenvironment{enumerate-(I)-r}{\begin{enumerate}[label={\upshape (\Roman*)}, leftmargin=2pc,resume]}{\end{enumerate}}
\newenvironment{enumerate-(1)}{\begin{enumerate}[label={\upshape (\arabic*)}, leftmargin=2pc]}{\end{enumerate}}
\newenvironment{enumerate-(1)-r}{\begin{enumerate}[label={\upshape (\arabic*)}, leftmargin=2pc,resume]}{\end{enumerate}}

\newenvironment{enumerate-(star)}{\begin{enumerate}[label={\upshape{(\( \star_{ \arabic*} \))}}, leftmargin=2pc]}{\end{enumerate}}

\renewcommand{\o}{\omega}
\newcommand{\lo}{<\!\omega{}}

\newcommand{\vv}{\mathbf{v}}

\newcommand{\ee}{\mathbf{e}}

\newcommand{\R}{\mathbb{R}} 
 
\newcommand{\N}{\mathbb{N}} 

\newcommand{\UU}{\mathcal{U}}

\newcommand{\bDelta}{\mathbf{\Delta}}

\newcommand{\interior}{\operatorname{int}}
\newcommand{\scl}{\operatorname{scl}}
\renewcommand{\int}{\operatorname{int}}

\allowdisplaybreaks

\mathchardef\mhyphen="2D
\newcommand{\rest}{\!\restriction\!}

\newcommand{\St}{\operatorname{St}}

\newcommand{\es}{\varnothing}
\renewcommand{\ge}{\geqslant}
\renewcommand{\le}{\leqslant}

\renewcommand{\geq}{\geqslant}

\renewcommand{\Cap}{\bigcap}
\renewcommand{\Cup}{\bigcup}

\newcommand{\cat}{{}^\frown}

\renewcommand{\subseteq}{\subset}

\newcommand{\obDelta}{\overset{\circ}{\mathbf{\Delta}}{}}

\title{Alexander's conjecture for infinite simplicial complexes} 

\subjclass[2020]{05E45, 57Q05}

\author[M.~Iannella]{Martina Iannella} 
\address{Department of Discrete Mathematics and Geometry, TU Wien.~Wiedner Hauptstr.~8--10, 1040 Vienna, Austria}
\email{martina.iannella@tuwien.ac.at}
\author[V.~Weinstein]{Vadim Weinstein} 
\address{Center for Ubiquitous Computing\\
Erkki Koiso-Kanttilan katu 3\\ 
door E P.O Box 4500\\
FI-90014 University of Oulu} 
\email{vadim.weinstein@iki.fi}

\thanks{The first author was supported by the Austrian Science Fund (FWF) [10.55776/ESP1625325]. The second author was supported by a European Research Council Advanced Grant (ERC AdG, ILLUSIVE: Foundations of Perception Engineering, 101020977)}

\begin{document}

\begin{abstract}
  Alexander's conjecture states that for every two finite triangulations of the same topological space, if they have a common subdivision, then they have a common stellar subdivision. We generalize the recent result of Adiprasito and Pak, who resolved Alexander's conjecture for finite simplicial complexes, to infinite simplicial complexes.
\end{abstract}

\maketitle

\tableofcontents

\section{Introduction}

The classical problem of comparing triangulations of a fixed topological space via local combinatorial moves goes back to the works of Alexander and Newman, where stellar subdivisions as elementary operations on simplicial complexes were introduced (see \cite{Ale30, New26, New31}). These ideas became foundational in piecewise-linear (PL) topology, and were further developed and formalized in standard references such as \cite{hudson1969pl,Lickorish1999}. In this setting, stellar subdivisions generate a natural notion of refinement between triangulations.

A central theme in this theory is the relationship between common subdivisions and sequences of stellar moves. In particular, a conjectural principle, often referred to as \textit{Alexander’s conjecture} in combinatorial topology, asserts that if two triangulations of the same space admit a common subdivision, then they should also admit a common stellar subdivision.

In the finite setting, this conjecture has recently been resolved by Adiprasito and Pak in \cite{AP24}.

The goal of this paper is to extend this result to infinite locally finite simplicial complexes. Working in a fixed ambient space $X$, we consider triangulations which may be infinite but locally finite in the usual sense. 
We appropriately define stellar subdivisions of such
triangulations which are obtained as the result of
an infinite sequence of stellar subdivisions by
a locally finite sequence. We call them $\omega$-stellar subdivisions.
The main result shows that the existence of a common subdivision implies the existence of a common 
$\omega$-stellar subdivision.
See Definition \ref{def:StellarSubdivision} for precise treatment. The local finiteness assumption ensures that the resulting complexes are well-defined and agree with the usual finite stellar subdivision on compact subsets.

\begin{thm}\label{thm:intro_main}
  Let $X$ be a topological space and let $S$ and $T$ be 
  (possibly infinite) triangulations of $X$ which admit a 
  common subdivision. Then there exist locally finite 
  sequences $\bar s$ and $\bar t$ in \(X\) such that
  \[
    S * \bar s \;=\; T * \bar t,
  \]
  where \(S*\bar s\) and \(T*\bar t\) denote the \(\omega\)-stellar subdivisions of \(S\)  and \(T\) determined by \(\bar s\) and \(\bar t\), respectively.
\end{thm}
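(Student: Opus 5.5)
The plan is to reduce to the finite theorem of Adiprasito and Pak \cite{AP24}, applied locally and in a relative form, and then glue the local answers into one locally finite sequence by an exhaustion of $X$. Let $R$ be a common subdivision of $S$ and $T$. Since $S$ is locally finite and $X$ is its polyhedron, I will choose an exhaustion $K_0\subset K_1\subset\cdots$ of $S$ by finite subcomplexes with $\bigcup_n K_n=S$ and $K_n$ contained in the interior of $|K_{n+1}|$ (for example, by iterated closed stars of a finite starting subcomplex, one per component). The construction is inductive: at stage $n$ I append finitely many points to $\bar s$ and to $\bar t$, so that after stage $n$ the current complexes $S_n=S*\bar s^{(n)}$ and $T_n=T*\bar t^{(n)}$ agree on a neighbourhood of $|K_n|$, and the later points are never placed inside $|K_{n-1}|$. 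Then every compact set meets only finitely many terms of $\bar s$ and $\bar t$, so both sequences are locally finite. The $\omega$-stellar subdivisions stabilise on each $|K_n|$ (as Definition~\ref{def:StellarSubdivision} ought to guarantee), and they coincide on every $|K_n|$, hence everywhere, which gives $S*\bar s=T*\bar t$.

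For stage $n$, I restrict to finite subcomplexes. I take $A\subset S_{n}$ and $B\subset T_{n}$ to be the finite subcomplexes consisting of all simplices meeting a slightly enlarged compact neighbourhood of $|K_{n+1}|$, choosing the neighbourhood so that $|A|$ and $|B|$ are related through the common subdivision $R$, which is still a common subdivision of $S_n$ and $T_n$ after a further refinement. A clean way to arrange this is to pass to a common finite subcomplex region $P$ that is a subcomplex of suitable derived subdivisions of both. By induction $A$ and $B$ already agree near $|K_n|$. I then want a relative version of \cite{AP24}: two finite triangulations of the same polyhedron that have a common subdivision and coincide on a subcomplex $L$ have a common stellar subdivision obtained by starring only at points outside $L$, or at least outside a smaller collar of $L$.

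To get the relative version I would redo the Adiprasito--Pak argument while keeping track of where the stellar moves are made. If that is too delicate, I would use a trick instead: the stellar moves in \cite{AP24} can be localised to open stars of the simplices that differ. Near $|K_n|$ the two complexes are already equal, so the only moves that matter are those in the region between $|K_n|$ and $|K_{n+1}|$. Taking a finite sequence that also subdivides the overlap region compatibly makes the extra moves land on both sides at the same points, and these can be added to both $\bar s$ and $\bar t$ without breaking the agreement already achieved.

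The main obstacle is this relative or localised form of the finite theorem. A stellar subdivision at a point near the boundary of $|A|$ also changes simplices that extend beyond the finite piece, so the finite answer must be made compatible with the rest of the infinite complex. In addition, the points chosen at later stages must avoid $|K_{n-1}|$, or they will destroy the agreement already obtained. My first attempt is to apply \cite{AP24} to the pair (star of $|K_{n+1}|$ in $S_n$, star in $T_n$), after first taking a common stellar subdivision of the boundary links. Any move supported near the frontier is duplicated symmetrically on both sides, and because stellar moves at the same point commute with agreement of the complexes, agreement is preserved.
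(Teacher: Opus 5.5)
The step you yourself call the main obstacle is a genuine gap, and the workaround you sketch does not close it. You need a relative form of \cite{AP24}: agreement already achieved near $|K_n|$ must survive the new moves. However, the result of starring at several points depends on their order as soon as they lie in a common simplex, and that simplex may lie outside the finite piece on which you invoked \cite{AP24}.

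Concretely, let $z_1,z_2$ be interior points of the edges $[a,b]$ and $[b,c]$ of a triangle $[a,b,c]$. Let $[a,b,d]$ and $[b,c,e]$ be triangles attached along these edges and meeting each other only in $b$. On the piece $\scl([a,b,d])\cup\scl([b,c,e])$, starring at $z_1,z_2$ in either order gives the same complex. But $[a,b,c]*z_1*z_2$ contains the edge $[c,z_1]$, while $[a,b,c]*z_2*z_1$ contains the crossing edge $[a,z_2]$.

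So $A*\bar a=B*\bar b$ does not force $S_n*\bar a$ and $T_n*\bar b$ to agree near the frontier of $|A|$. Performing frontier moves ``on both sides'' helps only if those points occur in the same relative order in $\bar a$ and $\bar b$, and \cite{AP24} does not provide this. Your assertion that its moves localise to open stars of differing simplices is an unsupported claim about the internals of that proof.

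You also cannot fill this by quoting the paper's gluing Lemma~\ref{lemma:ExtedingSubdivisions}, which asserts exactly such a gluing. Take $S_0=T_0=\scl([a,b,c])$ and $S_1=\scl([a,b,d])\cup\scl([b,c,e])\cup\scl([f,g,h])$, with $[f,g,h]$ disjoint from the rest. Let $T_1$ be the same but with $[f,g,h]$ starred at an interior point $p$ (so $S\ne T$), and set $\bar s_0=\bar t_0=\es$, $\bar s_1=(z_1,z_2,p)$, $\bar t_1=(z_2,z_1)$. All hypotheses hold, yet $[a,c,z_1]\in (S*\bar s_1)\setminus (T*\bar t_1)$. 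The lemma's proof silently trades $\bar s_1$ for $\bar t_1$ in the step ``$\sigma'*\bar s_1$ is also a subcomplex of $T*\bar t_1$''. What is missing is a genuinely additional ingredient: for instance, a relative version of \cite{AP24} whose witnesses avoid a prescribed subcomplex, or whose terms lying in the already treated region coincide as sequences.

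Independently, you never produce finite pieces to which \cite{AP24} applies. That theorem needs two triangulations of the same polyhedron, but finite subcomplexes $A\subset S_n$, $B\subset T_n$ covering a given compact set typically have $|A|\ne|B|$. For $X=\R$ with vertex sets $\mathbb{Z}$ and $\mathbb{Z}+\frac12$ (which have a common subdivision), no two nonempty finite subcomplexes have the same realization. The ``common region $P$'' in derived subdivisions of both is never constructed.

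The paper avoids this by first treating the case $S\subd T$, where $S\rest|T_i|$ and $T_i$ automatically triangulate the same set. It then reduces the general case to two applications of that case through the common subdivision $Z$, which yields $(T*\bar t)*\bar z=S*\bar s$. The resulting $\omega+\omega$ sequence is reorganised into a single locally finite sequence $\bar w$ by inserting $z_i$ between consecutive blocks $\bar t^{k(i-1),k(i)}$ and $\bar t^{k(i),k(i+1)}$. This uses the fact that points outside a finite subcomplex do not affect its stellar subdivisions (Lemma~\ref{lemma:Collection2}\ref{fact:OrderChange}). Your single-pass scheme would make that reorganisation unnecessary, but only once both gaps are filled.
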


The paper is organized as follows. Section~\ref{sec:Preliminaries} contains the necessary background on simplicial complexes and stellar subdivisions, including the definition and basic properties of locally finite subdivision sequences. Section~\ref{sec:Main result} contains the proof of the main theorem.

\section{Notation, terminology and preliminary results}\label{sec:Preliminaries}

By $A\subset B$ we mean that $A$ is a subset of $B$, possibly $A=B$.
By $|f|$ we denote
the range of the map $f\colon X \to Y$. Interchangeably, we
denote the composition of functions $f\colon X\to Y$ and
$g\colon Y\to Z$ either by $g\circ f$ or~$gf$.
We denote both finite
and infinite sequences as $\bar z=(z_i)_{i\in I}$ for e.g. $I=\N$. 
By $l(\bar z)$ we denote the length of $\bar z$,
and we start indexing from
$0$ unless mentioned otherwise.
If
each element of the sequence is an element of $A$, we denote
$\bar z\subset A$. Slightly abusing notation, $|\bar z|$ is
the range $\{z_i\mid i\in I\}$ of the sequence.
The concatenation of sequences is denoted by $\bar z\cat \bar w$.
If $\bar s=\bar z\cat \bar w$, we write $\bar z\subset \bar s$
and say that $\bar s$ is an \textbf{end-extension} of $\bar z$.
If $\bar s^0\subset \bar s^1\subset \cdots$ is a sequence of
sequences such that $\bar s^{i+1}$ is an end-extension of $\bar s^i$,
then we denote by $\Cup_{i\in\N}\bar s^i$ the sequence $\bar s$
whose $j$-th element equals the $j$-th element of some (any) $\bar s^i$
with $l(\bar s^i)>j$.

If $X$ is a topological space and $Y \subseteq X$, then by $\interior_X(Y)$ and $\partial_X Y$ we denote, respectively, the interior and boundary of $Y$ in $X$. If $X$ is clear from the context, it is dropped from subscript. Sometimes the interior of $Y$ is denoted also by $\overset{\circ}{Y}$. The closure of $Y$ is denoted by~$\overline{Y}$.

\begin{defn}\label{def:Simplex}
  The \textbf{standard $n$-simplex} is the convex hull of the standard
  basis $\{\ee_k\mid 0\le k\le n\}$ of $\R^{n+1}$ denoted
  $\bDelta^n$. An \textbf{$n$-simplex} in a topological space $X$ is a
  (continuous) embedding $\kappa\colon \bDelta^n\to X$. The number $n$
  is the \textbf{dimension} of $\kappa$ and is denoted by
  $\dim(\kappa)$.  If $X=\R^m$ and $\kappa$ is linear, then $\kappa$
  is a \textbf{rectilinear} $n$-simplex.
\end{defn}

\begin{defn}\label{def:Faces}
  A \textbf{standard (proper)} $k$-\textbf{face} of the standard
  $n$-simplex is the convex hull of a (proper) subset of
  $\{\ee_0,\dots,\ee_n\}$ of size $k+1$. Given a simplex
  $\kappa\colon\bDelta^n\to X$ in a topological space $X$, its
  \textbf{$k$-face} is a $k$-simplex $\lambda\colon \bDelta^k\to X$
  such that $\kappa^{-1}\lambda$ is an
  isometry from $\bDelta^k$ onto a standard $k$-face of $\bDelta^n$
  (in particular $\kappa^{-1}\lambda$ is rectilinear). The $0$-faces
  (both standard and otherwise) are called
  \textbf{vertices}. Technically a vertex is a function from the
  singleton $\{\ee_k\}$, for $k \in \{0, \dots, n\}$, to $X$, but we
  usually identify it with the unique point $\kappa(\ee_k)$ in its
  range.  The set of vertices of a simplex $\kappa$ is denoted by
  $V(\kappa)$.  Let $\partial\bDelta^n$ be the union of all standard
  proper faces of $\bDelta^n$. Let
  $\obDelta^n=\bDelta^n\setminus \partial\bDelta^n$.  For any simplex
  $\kappa\colon \bDelta^n\to X$, denote
  \begin{equation}
    \overset{\circ}{\kappa}:=\kappa\rest \obDelta^n.
    \label{eq:oversetcirc}
  \end{equation}
  Note that according to this definition, for a vertex $v$, we have
  $\overset{\circ}v=v$
  , because the only proper face of 
  a vertex is empty.
\end{defn}

\begin{defn}\label{def:EquivalenceOfsimplices}
  Two $n$-simplices $\kappa,\lambda\colon \bDelta^n\to X$ are considered
  \textbf{equivalent}, if $|\kappa|=|\lambda|$ and
  $\kappa^{-1}\lambda$ (and hence also $\lambda^{-1}\kappa$)
  is an isometry of $\bDelta^n$ onto itself.  \textbf{From now on we
    consider simplices upto this equivalence relation unless stated
    otherwise.}
\end{defn}

\begin{fact_n_def}\label{def:DeltaInRm}
  A rectilinear simplex $\kappa\colon \bDelta^n\to \R^m$ is uniquely
  (upto the equivalence relation defined above) determined by its
  vertices $V(\kappa)=\{\kappa(\ee_i)\mid 0\le i\le n\}$.  Conversely,
  if $W=\{\vv_0,\dots,\vv_n\}$ are points in $\R^m$, there is a unique
  linear map from $\bDelta^n$ to $\R^m$ with $\ee_i\mapsto \vv_i$ for
  all $0\le i \le n$.  This map is denoted by
  $\Delta[W]=\Delta[\vv_0,\dots,\vv_n]$.  Note that $|\Delta[W]|$ is
  the convex hull of~$W$. The map $\Delta[W]$ is a rectilinear
  $n$-simplex if and only if the vectors $\vv_i-\vv_0$ are linearly
  independent for $1\le i\le n$. In this case, clearly
  $V(\Delta[W])=W$. See also Definition~\ref{def:DeltaInComplex} for a
  generalization of the operation $\Delta$ for any simplex.
\end{fact_n_def}

\begin{defn}\label{def:simplicial_complex}\cite[Chapter III]{hudson1969pl}
  Let \(X\) be a topological space. A \textbf{simplicial complex in
    \(X\)} is a (possibly finite) countable set $T$ of simplices in
  $X$ such that the following hold up to the equivalence relation
  of Definition~\ref{def:EquivalenceOfsimplices}:
  \begin{enumerate-(i)}
  \item\label{def:simplicial_complex-1} Every face of every simplex in
    $T$ is in $T$.
  \item\label{def:simplicial_complex-2} If $\kappa_0,\kappa_1\in T$
    and $|\kappa_0|\cap |\kappa_1|\ne\es$, then there is $\lambda\in T$
    such that $|\lambda|=|\kappa_0|\cap |\kappa_1|$ and $\lambda$
    is a face of both $\kappa_0$ and $\kappa_1$.
  \item\label{def:simplicial_complex-3} For all
    $\kappa\in T$ there is a neighbourhood $U\subset X$ of $|\kappa|$
    such that there are at most finitely many $\lambda\in T$
    with $|\lambda|\cap U\ne\es$.
  \end{enumerate-(i)}
  Sometimes we call a simplicial complex just \textbf{complex}. If
  $T_0\subseteq T$ is also a simplicial complex, then $T_0$ is called
  a \textbf{subcomplex} of~$T$.  We denote by
  \(V(T)=\Cup_{\kappa\in T} V(\kappa)\) the set of all $0$-simplices
  (vertices) of~\(T\). The set $|T| = \Cup_{\kappa\in T} |\kappa|$ is the \textbf{realization}
  of~$T$ and is endowed with the topology induced from \(X\). If
  $|T|=X$, we say that $T$ is a \textbf{triangulation} of~$X$.  Thus,
  $T$ is always a triangulation of $|T|$. 
  Note that condition \ref{def:simplicial_complex-3} implies that
  $|T|$ is locally compact.
  
  If $S\subset T$,
  let $\scl(S)$ be the smallest subcomplex of $T$ which
  contains~$S$. So, for a simplex $\kappa$, by $\scl(\kappa)$ we denote the
  complex which consists of $\kappa$ and all its faces. A special case
  is $\scl(\bDelta^n)$ which consists of
  the identity map $\bDelta^n\to\bDelta^n$
  and \textit{its} faces.
\end{defn}

\begin{defn}\label{def:rel_complexes}
  Let \(S\) and $T$ be simplicial complexes. We say that $S$ is a \textbf{subdivision} of $T$, denoted
    $S\subd T$, if $|S|=|T|$, and for all $\sigma\in S$
    there is $\kappa\in T$ such that $|\sigma|\subseteq |\kappa|$ and
    $\kappa^{-1}\sigma$ is rectilinear.
\end{defn}

The following is a collection of useful observations.

\begin{Lemma}\label{lemma:Collection1}
  \begin{enumerate-(i)}
  \item  \label{fact:SubsetImpliesSimplex}
    Suppose $S$ and $T$ are complexes and $S\subset T$.  If
    $\tau\in T$ is such that $|\tau|\subset |S|$, then $\tau\in S$.
  \item \label{fact:SubcomplexEquals}
    Suppose $S$ and $T$ are complexes, $S\subset T$ and $|S|=|T|$.
    Then $S=T$.
  \item \label{fact:FiniteSubdFinite}
    A subdivision of a finite complex is finite.
  \end{enumerate-(i)}
\end{Lemma}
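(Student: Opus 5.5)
The plan is to prove the three items in order, since (ii) follows quickly from (i), and (iii) uses the local finiteness condition \ref{def:simplicial_complex-3} of Definition~\ref{def:simplicial_complex}.

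For \ref{fact:SubsetImpliesSimplex}, let $\tau\in T$ with $|\tau|\subset|S|$. First pick a point $x$ in the relative interior of $\tau$, i.e.\ $x\in|\overset{\circ}\tau|$. Since $x\in|S|$, there is $\sigma\in S$ with $x\in|\sigma|$. Now $\sigma\in T$ as well, so by \ref{def:simplicial_complex-2} there is $\lambda\in T$ which is a face of both $\sigma$ and $\tau$ and satisfies $|\lambda|=|\sigma|\cap|\tau|\ni x$. A face of $\tau$ containing an interior point of $\tau$ must be $\tau$ itself. Indeed, $\tau^{-1}\lambda$ is an isometry onto a standard face of $\bDelta^n$, and every proper standard face lies in $\partial\bDelta^n$, which avoids $\tau^{-1}(x)$. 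Hence $\lambda=\tau$ up to equivalence. Since $\lambda$ is a face of $\sigma\in S$, item \ref{def:simplicial_complex-1} for $S$ gives $\tau\in S$.

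For \ref{fact:SubcomplexEquals}, every $\tau\in T$ satisfies $|\tau|\subset|T|=|S|$, so $\tau\in S$ by (i). Thus $T\subset S$, and with $S\subset T$ we get $S=T$.

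For \ref{fact:FiniteSubdFinite}, let $S\subd T$ with $T$ finite. Then $|S|=|T|$ is a finite union of compact sets, hence compact. By \ref{def:simplicial_complex-3}, every $\sigma\in S$ has an open neighbourhood $U_\sigma$ that meets only finitely many simplices of $S$. These $U_\sigma$ cover $|S|$, so a finite subfamily $U_{\sigma_1},\dots,U_{\sigma_k}$ also covers it. Every simplex of $S$ is nonempty, so its realization meets some $U_{\sigma_i}$. Therefore $S$ is a finite union of finite sets, hence finite.

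The only delicate point is the claim in (i) that a face containing an interior point is the whole simplex, which depends on the equivalence convention of Definition~\ref{def:EquivalenceOfsimplices}. Note also that (iii) uses only compactness of $|T|$ and not the subdivision structure.
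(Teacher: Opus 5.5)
Your proposal is correct and follows the paper's proof essentially step for step. In (i) you use an interior point of $\tau$ and the common-face axiom to show $\tau$ is a face of some $\sigma\in S$, (ii) follows immediately from (i), and for (iii) you spell out the compactness-plus-local-finiteness argument that the paper only cites (your uniform handling of vertices via $\overset{\circ}v=v$ is also fine).
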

\begin{proof}
  \ref{fact:SubsetImpliesSimplex} We can assume that \(\tau\) is not a vertex.  Since
  $|\tau|\subset |S|$, $|\overset{\circ}{\tau}|$ intersects \(|\sigma|\) for some
  simplex $\sigma$ in $S \subset T$ and the intersection must be in a common face
  of $\sigma$ and $\tau$.  But the only face of $\tau$ which
  intersects its interior is $\tau$ itself. Thus, $\tau$ is a face of
  $\sigma\in S$. Since $S$ is a complex, $\tau\in S$.
  \ref{fact:SubcomplexEquals} Suppose $\tau\in T$. Then
  $|\tau|\subset |T|=|S|$, so by \ref{fact:SubsetImpliesSimplex},
  $\tau \in S$.
\ref{fact:FiniteSubdFinite} follows by compactness and by Definition
  \ref{def:simplicial_complex}\ref{def:simplicial_complex-3}.
\end{proof}

\begin{Def}\label{def:Star}
  Let $T$ be a simplicial complex and $A$ a set. The \textbf{star} of
  $A$ in $T$, denoted $\St_T(A)$, is the smallest subcomplex $S$ of
  $T$ with $A\cap |T|\subset |S|$. The \textbf{link} of $A$ in $T$,
  denoted $\Lk_T(A)$, is the set
  $$\Lk_{T}(A):=\{\kappa\in \St_T(A)\mid A\cap |\kappa|=\es\}.$$
  It is also a subcomplex of~$T$. For a singleton, we denote
  $\St_{T}(\{z\})=\St_{T}(z)$ and $\Lk_T(\{z\})=\Lk_T(z)$.
\end{Def}

The above definition is written with the following possibility in
mind.  Suppose $S$ is a subcomplex of $T$ and $A\subset |T|$. Then
$\St_{S}(A)$ and $\Lk_S(A)$ are well-defined. In particular, if
$A\cap |S|=\es$, then $\St_{S}(A)=\Lk_S(A)=\es$, and if
$A\subset |S|$, then $\St_S(A)=\St_T(A)$ and $\Lk_S(A)=\Lk_T(A)$.

\begin{Fact}\label{fact:LinkStarVert}
  Suppose $z\in |T|$.  If $\lambda\in\Lk_{T}(z)$, then there is
  $\kappa\in\St_T(z)$ such that $\lambda$ is a face of $\kappa$ and
  $z\in |\kappa|$.  In particular
  $V(\lambda)\cup \{z\}\subset |\kappa|$. Note also that the set of
  vectors $\{\kappa^{-1}(v)-\kappa^{-1}(z)\mid v\in V(\lambda)\}$ is
  linearly independent, because $\kappa^{-1}(z)$ is either in the interior of
  $\bDelta^{\dim(\kappa)}$ or a vertex of \(\kappa\) that does not belong to \(V(\lambda)\), while $\kappa^{-1}(v)$ are vertices of \(\kappa\) for
  $v\in V(\lambda)$.
\end{Fact}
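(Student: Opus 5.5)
The statement is Fact~\ref{fact:LinkStarVert}. I would prove it directly from Definition~\ref{def:Star} and Definition~\ref{def:simplicial_complex}.

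The first step is to describe $\St_T(z)$ explicitly. Let $C$ be the set of all faces of simplices $\kappa\in T$ with $z\in|\kappa|$. Condition~\ref{def:simplicial_complex-1} makes $C$ a subcomplex of $T$, and $z\in|C|$ because $z\in|T|$. Any subcomplex $S$ with $z\in|S|$ contains some $\sigma$ with $z\in|\sigma|$, but it must contain every $\kappa$ with $z\in|\kappa|$, not merely one. So minimality alone does not force $S=C$: the star as literally defined only needs to cover $A\cap|T|$. I would therefore reread the definition as intended. The smallest subcomplex containing $A\cap |T|$ is the closure of the carrier simplex of $z$ in Lemma~\ref{lemma:Collection1}\ref{fact:SubsetImpliesSimplex} style. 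Concretely, with $\mu$ the unique simplex having $z\in|\overset{\circ}\mu|$, we get $\St_T(z)=\scl(\mu)$. Uniqueness of $\mu$ comes from condition~\ref{def:simplicial_complex-2}: two such simplices meet in a common face that contains an interior point of each, so both equal that face. Every subcomplex containing $z$ contains a simplex through $z$, hence contains $\mu$ as a face, hence contains $\scl(\mu)$.

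With this description the first claim is immediate. Take $\lambda\in\Lk_T(z)\subset\St_T(z)$ and set $\kappa=\mu$. Then $\lambda$ is a face of $\kappa$, $z\in|\kappa|$, and hence $V(\lambda)\cup\{z\}\subset|\kappa|$. If the intended star is instead $C$, the same argument works, since by construction $\lambda$ is a face of some $\kappa$ with $z\in|\kappa|$.

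For linear independence, let $n=\dim\kappa$ and put $p=\kappa^{-1}(z)\in\bDelta^n$. The vertices $\kappa^{-1}(v)$ for $v\in V(\lambda)$ are standard basis vectors $\ee_i$ with $i\in I$, where $I\subsetneq\{0,\dots,n\}$ because $z\notin|\lambda|$. Write $p=\sum_j p_j\ee_j$ with $p_j\ge0$ and $\sum_j p_j=1$. Since $z\notin|\lambda|$, some $j_0\notin I$ has $p_{j_0}>0$. This covers both cases named in the statement: $p$ interior, or $p$ a vertex outside $V(\lambda)$. Now suppose $\sum_{i\in I}c_i(\ee_i-p)=0$. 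Reading off the $j_0$ coordinate gives $-p_{j_0}\sum_i c_i=0$, so $\sum_i c_i=0$. The equation then reduces to $\sum_i c_i\ee_i=0$, which forces all $c_i=0$.

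The only delicate point is the first step, pinning down what $\St_T(z)$ contains. The linear algebra is routine.
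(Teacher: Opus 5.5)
Your proof is correct. The first claim is obtained, as the paper implicitly does, by unpacking Definition~\ref{def:Star}.

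Your observation that the literal wording gives $\St_T(z)=\scl(\mu)$ is right. With that reading, the formula in Definition~\ref{def:StellarSubdivision} would not even produce a subdivision. For $z$ interior to an edge shared by two triangles, the literal link is just the two endpoints, so both triangles are removed and only two edges are added. Hence the intended star is your $C$, and you cover that reading too.

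The real difference from the paper is the linear-independence step. The paper argues from the dichotomy that $\kappa^{-1}(z)$ is either interior to $\bDelta^{\dim(\kappa)}$ or a vertex of $\kappa$ outside $V(\lambda)$. With the star $C$, this dichotomy is not exhaustive. Take $T=\scl(\kappa)$ for a triangle $\kappa=\Delta[a,b,c]$, let $z$ lie in the open edge $\Delta[a,b]$, and let $\lambda=c$. Then $\lambda\in\Lk_T(z)$, and $\kappa$ is the only simplex of $T$ containing $z$ with $\lambda$ as a face. Yet $\kappa^{-1}(z)$ lies in the relative interior of an edge of $\bDelta^2$, so it is neither interior nor a vertex.

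Your argument uses only that $z\notin|\lambda|$ forces a positive barycentric coordinate $p_{j_0}$ with $j_0\notin I$. It therefore treats every position of $z$ uniformly and closes this gap in the paper's justification, at no extra cost.

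One expository point: the sentence ``I would therefore reread the definition as intended'' is followed by the literal computation, which reads confusingly. It would be cleaner to state outright that the literal reading gives $\scl(\mu)$, the intended one gives $C$, and the first claim holds under both.
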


\begin{Def}\label{def:DeltaInComplex}
  Suppose that $T$ is a complex and $x_0,\dots,x_k\in |T|$. If there
  exists $\kappa$ such that $x_0,\dots,x_k\in |\kappa|$, then, abusing notation, let
  $$\Delta[x_0,\dots,x_k]=\kappa\circ\Delta[\kappa^{-1}(x_0),\dots,\kappa^{-1}(x_k)]$$
  (recall Fact and Definition~\ref{def:DeltaInRm}).  Note that
  $\Delta[x_0,\dots,x_k]$ is independent on the choice
  of~$\kappa\in T$ as long as $x_0,\dots,x_k\in |\kappa|$. Like in Definition
  \ref{def:DeltaInRm}, $\Delta[x_0,\dots,x_k]$ is a $k$-simplex if and only if
  $\kappa^{-1}(x_m)-\kappa^{-1}(x_0)$ are linearly independent for $1\le m\le k$,
  a property which is, again, independent on the choice of~$\kappa$.
  In this case, clearly
  \begin{equation}
    V(\Delta[x_0,\dots,x_k])=\{x_0,\dots,x_k\}. \label{eq:VerticesofDelta}
  \end{equation}
  Also note that if $\lambda$ is a simplex in $|T|$ (but not necessarily $\lambda\in T$) with
  $|\lambda|\subset |\kappa|$ for some $\kappa\in T$ such that
  $\kappa^{-1}\lambda$ is rectilinear, then
  $\lambda = \Delta[V(\lambda)]$.
\end{Def}

If $\bar z=(z_i)_{i\in \N}$ is a sequence of elements of a set \(X\),
and $A \subset X$, then denote by
\begin{equation}
  \bar z\cap A \label{eq:CapSeq}
\end{equation}
the subsequence $(z_i)_{i\in I}$ where
$I=\{i\in\N\mid z_i\in A\}$.
If $T$ is a complex and $\bar z\subset |T|$ has the property that
$\bar z\cap C$ is finite for all compact $C\subset |T|$, then we say
that $\bar z$ is \textbf{locally finite}. Note that 
a finite sequence is trivially locally finite.

\begin{Lemma} \label{lem:LocFinEq}
  For a complex $T$ and an infinite sequence $\bar z\subset |T|$ the
  following are equivalent:
  \begin{enumerate-(i)}
  \item \label{lem:LocFinEq-1} $\bar z$ is locally finite,
  \item \label{lem:LocFinEq-2} For all open $U\subset |T|$ such that
    $\overline{U}$ is compact, $\bar z\cap U$ is finite,
  \item \label{lem:LocFinEq-3} For all finite subcomplexes $S\subset T$, $\bar z\cap |S|$ is finite.
  \end{enumerate-(i)}
\end{Lemma}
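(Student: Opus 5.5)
I will prove the cycle of implications \ref{lem:LocFinEq-1}$\Rightarrow$\ref{lem:LocFinEq-2}$\Rightarrow$\ref{lem:LocFinEq-3}$\Rightarrow$\ref{lem:LocFinEq-1}.

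\emph{From \ref{lem:LocFinEq-1} to \ref{lem:LocFinEq-2}.} Let $U$ be open with $\overline{U}$ compact. Then $\bar z\cap U$ is a subsequence of $\bar z\cap\overline{U}$, and the latter is finite by local finiteness.

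\emph{From \ref{lem:LocFinEq-2} to \ref{lem:LocFinEq-3}.} Let $S\subset T$ be a finite subcomplex. For each $\kappa\in S$, Definition~\ref{def:simplicial_complex}\ref{def:simplicial_complex-3} gives an open neighbourhood $U_\kappa$ of $|\kappa|$ that meets only finitely many simplices of $T$. Set $U=\Cup_{\kappa\in S}U_\kappa$. This is open and contains $|S|$, and it meets only finitely many simplices $\lambda_1,\dots,\lambda_m$ of $T$. Since $U\subset |T|$, we get $U\subset |\lambda_1|\cup\dots\cup|\lambda_m|$, and this finite union of compact sets is compact and closed (assuming $X$ Hausdorff, or working with compact sets directly). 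Hence $\overline{U}$ is compact, so \ref{lem:LocFinEq-2} makes $\bar z\cap U$ finite, and therefore $\bar z\cap|S|$ is finite. The only subtlety is closedness of compact sets. If Hausdorffness is not available, I would instead note that $\overline{U}\cap|T|$ is contained in the closure of $\Cup|\lambda_i|$, which I would argue equals that union using local finiteness.

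\emph{From \ref{lem:LocFinEq-3} to \ref{lem:LocFinEq-1}.} This is the main step. Let $C\subset|T|$ be compact. I claim $C$ meets only finitely many simplices of $T$. For each $x\in C$, choose $\kappa_x\in T$ with $x\in|\kappa_x|$, and let $U_x$ be the neighbourhood of $|\kappa_x|$ given by \ref{def:simplicial_complex-3}. The sets $U_x$ cover $C$, so finitely many of them do. Each meets only finitely many simplices, so only finitely many simplices $\lambda$ satisfy $|\lambda|\cap C\neq\es$. Let $S$ be the subcomplex consisting of these simplices together with all their faces. It is finite, and $C\subset|S|$ because every point of $C\subset|T|$ lies in some simplex that meets $C$. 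By \ref{lem:LocFinEq-3}, $\bar z\cap|S|$ is finite, hence so is $\bar z\cap C$.

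I expect the compactness-covering argument in the last step to be the main one. The second step needs only a small care about closures.
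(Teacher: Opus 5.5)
Your proof is correct, under a Hausdorff assumption that the paper also tacitly needs (see below). It organizes the implications differently from the paper.

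The paper argues as follows:
\begin{itemize}
\item (i)$\Leftrightarrow$(ii) in one line, by citing local compactness of $|T|$;
\item (i)$\Rightarrow$(iii) from compactness of $|S|$;
\item (iii)$\Rightarrow$(i) by the same covering argument you give. It shows that $K=\{\kappa\in T\mid |\kappa|\cap C\ne\es\}$ is finite and leaves implicit the step you spell out, that $\scl(K)$ is a finite subcomplex with $C\subset|\scl(K)|$.
\end{itemize}
You instead run the cycle (i)$\Rightarrow$(ii)$\Rightarrow$(iii)$\Rightarrow$(i). Your new step (ii)$\Rightarrow$(iii) builds an open set $U\supset|S|$ with compact closure directly from condition~(iii) of Definition~\ref{def:simplicial_complex}. (You should use $U\cap|T|$, since the $U_\kappa$ live in $X$.) This replaces the paper's appeal to local compactness with a concrete argument. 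It also isolates exactly where a separation axiom is used: the finite union $|\lambda_1|\cup\dots\cup|\lambda_m|$ must be closed. The paper's route is shorter but relies on the same fact. Condition~(iii) of the definition only gives each point a compact neighbourhood, whereas the paper's (ii)$\Rightarrow$(i) needs open neighbourhoods with compact closure.

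You should drop the fallback for non-Hausdorff $X$. The closure of a finite union of simplices need not be that union, and in fact the lemma itself fails there. Here is a counterexample:
\begin{itemize}
\item Let $X$ be the real line with countably many origins $0_1,0_2,\dots$.
\item Let $T$ consist of the edge $[-1,0_1]$, its two vertices, and the isolated vertices $0_k$ for $k\ge2$.
\item Let $\bar z=(-1/(n+2))_{n\in\N}$.
\end{itemize}
Any open $U\subset|T|$ containing infinitely many $z_n$ has every $0_k$ in its closure. Now cover $|T|$ by the open sets $[-1,0)\cup\{0_1\}$ and $(-\tfrac12,0)\cup\{0_k\}$ for $k\ge2$; no finite subfamily covers $\overline U$, so $\overline U$ is not compact. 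Hence (ii) holds. But (iii) fails for the finite subcomplex $S$ formed by the edge and its vertices, since $|S|$ contains every $z_n$. So Hausdorffness is genuinely needed, both in your step (ii)$\Rightarrow$(iii) and in the paper's (ii)$\Rightarrow$(i).
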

\begin{proof}
  The equivalence \ref{lem:LocFinEq-1}$\iff$\ref{lem:LocFinEq-2}
  follows because $|T|$ is locally compact and the implication
  \ref{lem:LocFinEq-1}$\Rightarrow$\ref{lem:LocFinEq-3} follows
  because for a finite $S\subset T$, $|S|$ is compact. Let us prove
  \ref{lem:LocFinEq-3}$\Rightarrow$\ref{lem:LocFinEq-1}. Suppose
  $C\subset |T|$ is compact and $\bar z$ satisfies condition
  \ref{lem:LocFinEq-3}.  Let
  $K=\{\kappa\in T\mid |\kappa|\cap C\ne\es \}$.  It is enough to show
  that $K$ is finite. For each $\kappa\in K$, let $U_\kappa$ be an
  open neighbourhood of $\kappa$ which intersects finitely many
  simplices of $T$ which exists by Definition
  \ref{def:simplicial_complex}\ref{def:simplicial_complex-3}. Then
  $\UU=\{U_\kappa\mid \kappa\in K\}$ is an open cover of $C$ and has a
  finite subcover $\UU_*\subset\UU$ by the compactness of $C$. Now
    $$K\subset \{\sigma\in T\mid \sigma\cap U_\kappa\ne \es\text{ for some }U_\kappa\in\UU_*\}$$
    but the right hand side is finite.  
\end{proof}

\begin{Def}\label{def:StellarSubdivision}
  A \textbf{stellar subdivision} of a complex $T$ \textbf{at}
  $z\in |T|$, denoted $T*z$, is the set
  \begin{equation}
    T*z=\big(T\setminus \{\kappa\in T\mid z\in |\kappa|\}\big)\cup \{\Delta[V(\lambda)\cup \{z\}]\mid \lambda\in \Lk_T(z)\}.\label{eq:DefOfStellarSubd}  
  \end{equation}  

\begin{center}
\begin{tikzpicture}[scale=2]

\begin{scope}[shift={(-3,0)}]
    \coordinate (A) at (0,0);
    \coordinate (B) at (1,0);
    \coordinate (C) at (0.5,0.866);
    \coordinate (D) at (1.5,0.866);

    \filldraw[fill=gray!20, draw=black] (A) -- (B) -- (C) -- cycle;
    \filldraw[fill=gray!30, draw=black] (B) -- (D) -- (C) -- cycle;

    \fill (A) circle (1pt) node[below left] {};
    \fill (B) circle (1pt) node[below] {};
    \fill (C) circle (1pt) node[above] {};
    \fill (D) circle (1pt) node[above right] {};

    \node at (0.8,-0.5) {\small Original complex $T$};
\end{scope}

\begin{scope}[shift={(1,0)}]
    \coordinate (A) at (0,0);
    \coordinate (B) at (1,0);
    \coordinate (C) at (0.5,0.866);
    \coordinate (D) at (1.5,0.866);
    \coordinate (O) at (0.5,0.3); 

    \filldraw[fill=green!20, draw=black] (A) -- (B) -- (O) -- cycle;
    \filldraw[fill=green!30, draw=black] (B) -- (C) -- (O) -- cycle;
    \filldraw[fill=green!40, draw=black] (C) -- (A) -- (O) -- cycle;

    \filldraw[fill=gray!30, draw=black] (B) -- (D) -- (C) -- cycle;

    \fill (A) circle (1pt) node[below left] {};
    \fill (B) circle (1pt) node[below] {};
    \fill (C) circle (1pt) node[above] {};
    \fill (D) circle (1pt) node[above right] {};
    \fill (O) circle (1pt) node[right] {$z$};

    \node at (0.8,-0.5) {\small Stellar Subdivision $T * z$};
\end{scope}

\end{tikzpicture}
\end{center}
  
  By Fact~\ref{fact:LinkStarVert}, $T*z$ is well-defined.
  We skip the proof that $T*z$ is a subdivision of~$T$, see \cite{hudson1969pl}
  for a standard reference.
  If $\bar z=(z_0,\dots,z_{n})$ is a finite sequence of elements of
  \(|T|\), define by induction $T*\bar z=T$ for $n=0$ (empty $\bar z$)
  and for $n> 0$ let $T*\bar z=(T*z_0*\cdots*z_{n-1})*z_{n}$. This
  is called \textbf{a $\lo$-stellar subdivision} of~$T$.

  Suppose now that $\bar z=(z_0,z_1,\dots)$ is an infinite locally finite
  sequence. Define
  \begin{equation}
    T*\bar z:=\Cup_{i\in\N}\Cap_{j\ge i}T*\bar z^{j}\label{eq:OmegaSubd}  
  \end{equation}
  where $\bar z^j=(z_0,\dots,z_{j-1})$ is the initial segment of length $j$.
  Equivalently, $\sigma\in T*\bar z$ if and only if there exists $i\in\N$
  such that for all $j\ge i$, $\sigma\in T*\bar z^j$.
  We say that $T*\bar z$ is \textbf{an $\o$-stellar subdivision of~$T$}.
\end{Def}

\begin{center}
\begin{tikzpicture}[scale=2.2, every node/.style={font=\small}]

\begin{scope}[shift={(-2,0)}]
    \coordinate (A) at (0,0);
    \coordinate (B) at (1,0);
    \coordinate (C) at (0.5,0.866);
    \coordinate (D) at (1.5,0.866);

    \filldraw[fill=gray!20] (A)--(B)--(C)--cycle;
    \filldraw[fill=gray!30] (B)--(D)--(C)--cycle;

    \node at (0.8,-0.4) {$T$};
\end{scope}

\begin{scope}[shift={(0,0)}]
    \coordinate (A) at (0,0);
    \coordinate (B) at (1,0);
    \coordinate (C) at (0.5,0.866);
    \coordinate (D) at (1.5,0.866);
    \coordinate (Z0) at (0.5,0.3);

    \filldraw[fill=green!20] (A)--(B)--(Z0)--cycle;
    \filldraw[fill=green!30] (B)--(C)--(Z0)--cycle;
    \filldraw[fill=green!40] (C)--(A)--(Z0)--cycle;

    \filldraw[fill=gray!30] (B)--(D)--(C)--cycle;

    \fill (Z0) circle (1.2pt) node[right] {$z_0$};

    \node at (0.8,-0.4) {$T * \bar z^1$};
\end{scope}

\begin{scope}[shift={(2,0)}]

    \coordinate (A) at (0,0);
    \coordinate (B) at (1,0);
    \coordinate (C) at (0.5,0.866);
    \coordinate (D) at (1.5,0.866);

    \coordinate (Z0) at (0.5,0.3);
    \coordinate (Z1) at (1.1,0.5);
    \coordinate (Z2) at (0.75,0.45);

    \draw[fill=green!20] (A)--(B)--(Z0)--cycle;
    \draw[fill=green!30] (B)--(C)--(Z0)--cycle;
    \draw[fill=green!40] (C)--(A)--(Z0)--cycle;

    \draw[fill=gray!20] (B)--(D)--(Z1)--cycle;
    \draw[fill=gray!30] (D)--(C)--(Z1)--cycle;
    \draw[fill=red!40] (C)--(B)--(Z1)--cycle;

    \draw[fill=red!20] (B)--(Z0)--(Z2)--cycle;
    \draw[fill=red!30] (Z0)--(C)--(Z2)--cycle;
    \draw[fill=red!40] (C)--(B)--(Z2)--cycle;
    \draw (Z1) -- (Z2);

    \fill (Z0) circle (1.2pt) node[below] {$z_0$};
    \fill (Z1) circle (1.2pt) node[right] {$z_1$};
    \fill (Z2) circle (1.2pt) node[above] {$z_2$};

    \node at (0.8,-0.4) {$T * \bar z^3$};

\end{scope}

\end{tikzpicture}
\end{center}

\begin{Lemma}\label{lemma:InfiniteSubdivision}
  If $\bar z$ is locally finite, then $T*\bar z$ is a subdivision
  of~$T$.
\end{Lemma}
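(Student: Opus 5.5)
The plan is a localization argument: on any compact piece of $|T|$ the infinite sequence acts only finitely, so everything reduces to the finite case, which is the standard fact that $T*z$ subdivides $T$. Write $T_j=T*\bar z^j$. Since $T_{j+1}=T_j*z_j\subd T_j$ and subdivision is transitive (compose the rectilinear maps $\kappa^{-1}\sigma$), each $T_j$ is a subdivision of $T$, by induction on $j$.

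The key local claim is the following. For every finite subcomplex $S\subset T$ there is $i$ such that for all $j\ge i$,
\[
\{\sigma\in T_j \mid |\sigma|\subset |S|\}=\{\sigma\in T_i\mid |\sigma|\subset|S|\}.
\]
To prove it, take the open star neighbourhood $U$ of $|S|$, namely the union of the open simplices $|\overset{\circ}{\kappa}|$ for $\kappa\in\St_T(|S|)$. This star is finite by Definition~\ref{def:simplicial_complex}\ref{def:simplicial_complex-3}, so $\overline U$ is compact, and by Lemma~\ref{lem:LocFinEq} only finitely many $z_j$ lie in $|\St_T(|S|)|$. Choose $i$ beyond all of them.

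Next I check that a stellar move at a point $z$ only changes simplices whose realization meets the closed star of $z$. More precisely, it removes the simplices containing $z$ and adds cones over links; all of these lie in $|\St_{T_j}(z)|\subset|\St_T(\kappa_z)|$, where $\kappa_z\in T$ is the carrier of $z$. I must check that $|\St_{T_j}(z)|$ stays inside the union of the $T$-simplices containing $z$; this holds because $T_j\subd T$. If $z\notin|\St_T(|S|)|$, then no $T$-simplex containing $z$ meets $|S|$ in a way that lets a simplex inside $|S|$ contain $z$. Hence simplices inside $|S|$ are neither removed nor created. I expect this support bookkeeping to be the main obstacle. In particular, I must be careful that a newly created cone $\Delta[V(\lambda)\cup\{z\}]$ cannot lie in $|S|$ when $z\notin|S|$; this is immediate, since $z$ is in its realization.

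With the claim in hand, the conclusion follows. Exhausting $T$ by finite subcomplexes $S_0\subset S_1\subset\cdots$ (possible since $T$ is countable) gives that $T*\bar z$ restricted to $|S_n|$ equals $T_{i_n}$ restricted to $|S_n|$. We then verify the requirements in turn.
\begin{enumerate-(i)}
\item Every $\sigma\in T*\bar z$ lies in some $T_j\subd T$. So $\sigma$ sits rectilinearly inside some $\kappa\in T$, and $|\sigma|$ is contained in a finite subcomplex.
\item Conditions \ref{def:simplicial_complex-1} and \ref{def:simplicial_complex-2} of Definition~\ref{def:simplicial_complex} are checked inside $T_{i}$ for a large enough $i$, using the local claim for an $S$ containing both simplices.
\item Local finiteness (\ref{def:simplicial_complex-3}) follows because near $|S|$ the complex agrees with the finite complex $T_i$ restricted to $|\St_T(|S|)|$ (Lemma~\ref{lemma:Collection1}\ref{fact:FiniteSubdFinite}).
\item $|T*\bar z|=|T|$ because each point $x$ lies in some $|S|$, which is covered by the stabilized simplices of $T_i$ inside it. This uses $|T_i|=|T|$ and Lemma~\ref{lemma:Collection1}\ref{fact:SubsetImpliesSimplex}: a simplex of $T_i$ containing $x$ lies in a $T$-simplex $\kappa$, and taking $S=\scl(\kappa)$ makes it stable.
\end{enumerate-(i)}
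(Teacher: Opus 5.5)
Your proposal is correct and follows essentially the paper's route. Local finiteness of $\bar z$ means that past some index no $z_j$ lies in a given finite subcomplex (the paper uses $\St_T(U)$ for a suitable neighbourhood $U$ of a simplex), so that part of $T*\bar z^j$ freezes; this gives local finiteness of $T*\bar z$ and $|T*\bar z|=|T|$, while faces, intersections and rectilinearity are inherited from the complexes $T*\bar z^j\subd T$ for large $j$.

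Two small remarks. Your stabilization claim is easier than you feared: every simplex removed or created by $*z_j$ contains $z_j$, so $z_j\notin|S|$ alone suffices. In item (iv), fix $S$ (and hence $i$) before choosing the simplex of $T_i$ through $x$, to avoid the circular order of choices.
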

\begin{proof}
  Let $\bar z$ be a locally finite sequence. By notational convention as above,
  $\bar z=(z_0,z_1,\dots)$ and $\bar z^j$ is the initial segment of
  $\bar z$ of length $j$. For each $\kappa\in T*\bar z$ select
  $i_\kappa$ such that $\kappa\in T*\bar z^j$ for all $j\ge i_\kappa$. We first
  prove that $T*\bar z$ is a simplicial complex.  Suppose
  $\kappa\in T*\bar z$ and $\lambda$ is a face of $\kappa$.  Let
  $j\ge i_\kappa$.  Since $T*\bar z^j$ is a simplicial complex,
  $\lambda\in T*\bar z^j$.  Since $j$ was chosen arbitrarily,
  $\lambda\in T*\bar z$.  This proves Definition
  \ref{def:simplicial_complex}\ref{def:simplicial_complex-1}.  Suppose now
  $\kappa_0$ and $\kappa_1$ are simplices in $T*\bar z$ with
  $|\kappa_0|\cap |\kappa_1|\ne\es$. Let
  $j\ge \max\{i_{\kappa_0},i_{\kappa_1}\}$. Since $T*\bar z^j$
  satisfies Definition
  \ref{def:simplicial_complex}\ref{def:simplicial_complex-2}, there is
  $\lambda\in T*\bar z^j$ such that
  $|\lambda|=|\kappa_0|\cap|\kappa_1|$ and $\lambda$ is a face of
  both.  Again, by the arbitrariness of $j$, $\lambda\in T*\bar z$.
  This proves Definition
  \ref{def:simplicial_complex}\ref{def:simplicial_complex-2}.  Let us
  prove Definition \ref{def:simplicial_complex}\ref{def:simplicial_complex-3}.
  Suppose $\kappa\in T*\bar z$, and fix \(j \geq i_\kappa\). Then \(T * \bar z^j \subd T\) and so there is \(\kappa' \in T\) such that \(|\kappa| \subset |\kappa'|\). Let $U\subset |T|$ be an open neighbourhood
  of $|\kappa'|$ such that $S=\St_T(U)$ is finite. Such an \(U\) exists by Definition
  \ref{def:simplicial_complex}\ref{def:simplicial_complex-3} applied
  to $T$.  By the local finiteness of $\bar z$ and
  Lemma~\ref{lem:LocFinEq}, there is $i_0>i_\kappa$ such that for all
  $j\ge i_0$, $z_j\notin |S|$ and so it follows by the
  definition of a stellar subdivision and Lemma \ref{lemma:Collection1}\ref{fact:FiniteSubdFinite} that
  $$Z=\{\lambda\in T*\bar z^j\mid |\lambda|\cap U\ne\es\}$$
  is finite. But it also follows that
  $$\{\lambda\in T*\bar z\mid|\lambda|\cap U\ne \es\}=Z.$$
  Noticing that \(U\) is an open neighbourhood of \(|\kappa|\) as well, we are done.

  Next we need to prove that $T*\bar z$ is a subdivision of $T$. We
  will first prove that $|T|=|T*\bar z|$. Suppose $x\in |T|$. Then
  clearly $x\in |T*\bar z^j|$ for all $j$.  By a similar argument as
  above using local finiteness, there is $\kappa\in T*\bar z$ such
  that $x\in |\kappa|$ so we have $|T|\subset |T*\bar z|$. On the
  other hand, suppose $x\in |T*\bar z|$. Then there is $\kappa$ such
  that $\kappa\in T*\bar z^j$ for all $j>i$ for some $i\in\N$ and
  $x\in |\kappa|$. So $x\in |T*\bar z^j|$ for some $j$. Since
  $|T*\bar z^j|=|T|$, $x\in |T|$.

  Finally, we will show that for all $\sigma\in T*\bar z$ there is
  $\kappa\in T$ such that $|\sigma|\subset|\kappa|$ and
  $\kappa^{-1}\sigma$ is rectilinear. Suppose $\sigma\in T*\bar
  z$. Then for some $i$, $\sigma\in T*\bar z^j$ for all $j>i$. Fix
  some $j>i$. Since $T*\bar z^j$ is a subdivision of $T$, there is
  $\kappa\in T$ satisfying the required condition.
\end{proof}

Let $S$ be a simplicial complex in some space $X$ and let
$A\subset X$.  We will denote
\begin{equation}
  S\rest A=\{\sigma\in S\mid |\sigma|\subset A\}. \label{eq:DefOfRest}
\end{equation}

\begin{Lemma}\label{lemma:SubdSubcompl}
  Suppose $S_0$ is a subcomplex of $S$ and $\bar z\subset |S|$ is finite.
  Then $S_0*\bar z$ is a subcomplex of $S*\bar z$
  and $S_0*\bar z=(S*\bar z)\rest |S_0|$.
\end{Lemma}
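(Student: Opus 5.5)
We argue by induction on the length $n$ of $\bar z$, so the core case is a single point $z\in|S|$. Note that the statement must allow $z\notin|S_0|$. In that case we read $S_0*z$ as $S_0$: the set removed in \eqref{eq:DefOfStellarSubd} is empty, and so is $\Lk_{S_0}(z)$ by the remark after Definition~\ref{def:Star}. If $\bar z=(z_0,\dots,z_n)$, then $S_0*\bar z=(S_0*z_0*\cdots*z_{n-1})*z_n$. By the induction hypothesis, $S_0':=S_0*z_0*\cdots*z_{n-1}$ is a subcomplex of $S':=S*z_0*\cdots*z_{n-1}$ and $S_0'=S'\rest|S_0|$. Also $|S_0'|=|S_0|$, since stellar subdivision preserves the realization. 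Applying the one-point case to $S_0'\subset S'$ and $z_n$ then gives the claim for $\bar z$.

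\emph{One-point case, inclusion $S_0*z\subset (S*z)\rest|S_0|$.} Take $\kappa\in S_0*z$. If $\kappa\in S_0$ with $z\notin|\kappa|$, then $\kappa\in S$, $z\notin|\kappa|$, and $|\kappa|\subset|S_0|$, so $\kappa\in (S*z)\rest |S_0|$. Otherwise $\kappa=\Delta[V(\lambda)\cup\{z\}]$ with $\lambda\in\Lk_{S_0}(z)$. Since $z\in|S_0|$, we have $\Lk_{S_0}(z)=\Lk_S(z)\cap S_0$, so $\lambda\in\Lk_S(z)$ and $\kappa\in S*z$. By Fact~\ref{fact:LinkStarVert} there is a $\mu\in\St_{S_0}(z)\subset S_0$ with $V(\lambda)\cup\{z\}\subset|\mu|$. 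Convexity inside $\mu$ then gives $|\kappa|\subset|\mu|\subset|S_0|$. Here the simplex $\Delta[\cdot]$ is the same whether computed in $S_0$ or $S$ (Definition~\ref{def:DeltaInComplex}).

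\emph{Reverse inclusion.} Take $\kappa\in S*z$ with $|\kappa|\subset|S_0|$. If $\kappa\in S$ and $z\notin|\kappa|$, then Lemma~\ref{lemma:Collection1}\ref{fact:SubsetImpliesSimplex} gives $\kappa\in S_0$, hence $\kappa\in S_0*z$. The main obstacle is the remaining case $\kappa=\Delta[V(\lambda)\cup\{z\}]$ with $\lambda\in\Lk_S(z)$: we must show $\lambda\in\Lk_{S_0}(z)$.

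For this case, first note that $|\lambda|\subset|\kappa|\subset|S_0|$, so $\lambda\in S_0$ by Lemma~\ref{lemma:Collection1}\ref{fact:SubsetImpliesSimplex}. Next, let $\mu\in S$ be the carrier of $z$, that is, the unique simplex with $z\in|\overset{\circ}{\mu}|$. Since $z\in|\kappa|\subset|S_0|$, $\mu$ is a face of some simplex of $S_0$, so $\mu\in S_0$. It remains to see that $\lambda$ lies in a simplex of $S_0$ containing $z$. Let $\nu\in S$ contain $V(\lambda)\cup\{z\}$, chosen minimal. Then $\nu$ is the join of $\lambda$ and $\mu$, and $|\kappa|$ meets $|\overset{\circ}{\nu}|$ because $\kappa$ is the cone from the interior point $z$ of $\mu$ over $\lambda$. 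Hence some point of $|\overset{\circ}{\nu}|$ lies in $|S_0|$, so $\nu$ is a face of a simplex of $S_0$ and therefore $\nu\in S_0$. Thus $\nu\in\St_{S_0}(z)$, and since $z\notin|\lambda|$ we get $\lambda\in\Lk_{S_0}(z)$.

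Finally, $S_0*z$ is a subcomplex of $S*z$ because it equals $(S*z)\rest|S_0|$, which is closed under faces and is a subset of $S*z$.
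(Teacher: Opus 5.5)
Your proof is correct. The induction on the length of $\bar z$, the convention $S_0*z=S_0$ for $z\notin|S_0|$, and the inclusion $S_0*z\subset S*z$ via $\Lk_{S_0}(z)\subset\Lk_S(z)$ and the choice-independence of $\Delta[\cdot]$ all match the paper. The real difference is the reverse inclusion $(S*z)\rest|S_0|\subset S_0*z$.

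\begin{itemize}
\item The paper proves the subcomplex statement first. It then gets the reverse inclusion in one line: $|S_0*z|=|S_0|$ by the cited fact that a stellar subdivision is a subdivision, so Lemma~\ref{lemma:Collection1}\ref{fact:SubsetImpliesSimplex}, applied to the pair $S_0*z\subset S*z$, applies directly.
\item You instead check by hand that a cone $\Delta[V(\lambda)\cup\{z\}]$, with $\lambda\in\Lk_S(z)$ and lying in $|S_0|$, already comes from a link simplex of $S_0$. You use the carrier $\mu$ of $z$ and the simplex $\nu$ spanned by $V(\lambda)\cup V(\mu)$, whose interior meets the cone. This is the interior-point argument from the proof of Lemma~\ref{lemma:Collection1}\ref{fact:SubsetImpliesSimplex}, run on $\nu\in S$ relative to $S_0\subset S$ rather than on the subdivided complexes.
\item You also get $|\kappa|\subset|S_0|$ in the forward inclusion from convexity inside a simplex of $\St_{S_0}(z)$, not from $|S_0*z|=|S_0|$.
\end{itemize}

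The paper's route is shorter and avoids any cone geometry. Yours makes the one-point case purely combinatorial and shows exactly why the right link simplices appear. You still need $|S_0*z_0*\cdots*z_{n-1}|=|S_0|$ in the inductive step, so the dependence on the cited subdivision fact is relocated, not removed.

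One slip: the equality $\Lk_{S_0}(z)=\Lk_S(z)\cap S_0$ you assert in the forward inclusion is false in general. Take $S=\scl(\Delta[a,b,c])$, $S_0=\scl(\Delta[a,b])\cup\{c\}$, and $z$ in the interior of $\Delta[a,b]$. Then $c\in\Lk_S(z)\cap S_0$ but $c\notin\Lk_{S_0}(z)$. Only the inclusion $\subset$ holds, and that is all you use there. The failure of $\supset$ is exactly why your join argument in the reverse inclusion is needed.
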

\begin{proof}
  By applying induction on the length of $\bar z$, we can assume that
  it is a single point $\bar z=z$. If $z\notin |S_0|$, then $S_0*z=S_0$.
  For the first part, we need to show that $S_0*\bar z\subset S*\bar z$,
  so suppose that $\sigma\in S_0*\bar z$.
  By \eqref{eq:DefOfStellarSubd} $\sigma$
  is in one of the following sets:
  \begin{align}
    &S_0\setminus \{\sigma'\in S_0\mid z\in |\sigma'|\}\label{eq:Set1}\\
    &\{\Delta[V(\lambda)\cup \{z\}]\mid \lambda\in \Lk_{S_0}(z)\}\label{eq:Set2}
  \end{align}
  If it is in \eqref{eq:Set1}, then it is in $S_0$ and $z\notin |\sigma|$.
  So it is in $S$ since $S_0\subset S$ but it is not in 
  $$\{\sigma'\in S\mid z\in |\sigma'|\}.$$
  So it is in $S*z$.
  Otherwise it is in \eqref{eq:Set2}. So it is enough to show that
  $\{\Delta[V(\lambda)\cup \{z\}]\mid \lambda\in \Lk_{S_0}(z)\}\subset
  \{\Delta[V(\lambda)\cup \{z\}]\mid \lambda\in \Lk_{S}(z)\}.$
  But this follows if $\Lk_{S_0}(z)\subset\Lk_{S}(z)$, which is trivial to verify
  from Definition~\ref{def:Star}. So $\sigma\in S*z$ and this completes the proof
  of the first claim.

  For the second part, suppose $\sigma\in S_0*z$.  Since $S_0*z$ is a
  subdivision of $S_0$, $|S_0*z|=|S_0|$, so trivially
  $|\sigma|\subset |S_0|$. So it is enough to show that
  $\sigma\in S*\bar z$, but this follows from the first part of this
  lemma.  Suppose $\sigma\in (S*z)\rest |S_0|$. Since $|S_0|=|S_0*z|$,
  $|\sigma|\subset |S_0*z|$. By the first part, $S_0*z$ is a
  subcomplex of $S*z$. Apply Lemma
  \ref{lemma:Collection1}\ref{fact:SubsetImpliesSimplex} with
  $S=S_0*z$, $T=S*z$, and $\tau=\sigma$, so it follows that
  $\sigma\in S_0*z$.
\end{proof}

In the next lemma we collect some useful facts.

\begin{Lemma}\label{lemma:Collection2}
Let $S$ be a complex.
  \begin{enumerate-(i)}
  \item \label{lemma:SwapSetAndSeq} Let
    $A\subset |S|$ and $\bar z\subset |S|$ be finite.  Then
    $$(S*\bar z)\rest A=(S\rest A)*\bar z.$$
  \item\label{fact:ComplexUnchanged} Suppose that $S_0$ is a
    subcomplex of $S$.  Then $z\in |S|\setminus |S_0|$ or
    $z\in V(S_0)$ if and only if $S_0*z=S_0$.
  \item\label{fact:PreserveSimplex} Suppose
    $\sigma\in S$ and $\bar z\subset |S|$ is finite. Then
    $\sigma\in S*\bar z$ if and only if
    $|\bar z|\cap |\sigma|\setminus V(\sigma)=\es$.
  \item \label{fact:PreserveSimplexConv} Conversely, given a finite sequence $\bar z\subset |S|$, if
    $\sigma\in S*\bar z$ and $|\bar z|\cap |\sigma|=\es$, then
    $\sigma\in S$.
  \item \label{fact:OrderChange} Suppose $S_0$ is a subcomplex of $S$
    and $\bar z$ a finite sequence such that $|\bar z|\cap
    |S_0|=\es$. Suppose that $\bar a$ and $\bar b$ are finite
    sequences in $|S|$. Then
    $$S_0*\bar a*\bar b*\bar z=S_0*\bar a*\bar z*\bar b=S_0*\bar z*\bar a*\bar b=S_0*\bar a*\bar b.$$
  \end{enumerate-(i)}
\end{Lemma}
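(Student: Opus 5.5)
I will prove the five items in order. Items (iii) and (iv) describe what a single stellar move does to one simplex. Items (i), (ii) and (v) will be obtained from these, from Lemma~\ref{lemma:SubdSubcompl} and from Lemma~\ref{lemma:Collection1}. Throughout, induction on the length of $\bar z$ reduces every statement about a finite sequence to a single point $z$. The only care needed is that the inductive hypothesis is applied to $S*z_0$ rather than to $S$.

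For \ref{lemma:SwapSetAndSeq}, let $A\subset|S|$ and a single point $z$ be given. Note that $S\rest A$ is a subcomplex, since faces of a simplex lying in $A$ also lie in $A$. I apply Lemma~\ref{lemma:SubdSubcompl} with $S_0=S\rest A$, which gives $(S\rest A)*z=(S*z)\rest|S\rest A|$. It then remains to compare $(S*z)\rest|S\rest A|$ with $(S*z)\rest A$. The inclusion $\subset$ is clear because $|S\rest A|\subset A$. For the reverse inclusion, I take $\sigma\in S*z$ with $|\sigma|\subset A$ and must show $|\sigma|\subset|S\rest A|$. This is the delicate point, since a simplex of $S*z$ may lie in $A$ while the simplices of $S$ containing it do not. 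I expect this step to be the main obstacle. If the inclusion fails in general, I would first check the intended reading of the statement (for instance, $A$ being the realization of a subcomplex, which is how it is used). Under that reading the claim is exactly Lemma~\ref{lemma:SubdSubcompl}, and the induction on $\bar z$ goes through by applying the same lemma to $S*z_0$.

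For \ref{fact:ComplexUnchanged}, the two cases are direct. If $z\notin|S_0|$, then no simplex of $S_0$ contains $z$ and $\Lk_{S_0}(z)=\es$, so \eqref{eq:DefOfStellarSubd} returns $S_0$. If $z$ is a vertex of $S_0$, then each removed simplex $\kappa\ni z$ has $z$ as a vertex, so $\kappa=\Delta[V(\lambda)\cup\{z\}]$ for the face $\lambda$ of $\kappa$ opposite $z$, and $\lambda$ lies in the link. Hence the simplices added back are exactly those removed. Conversely, suppose $z\in|S_0|$ is not a vertex. Then $z\in V(S_0*z)$ but $z\notin V(S_0)$, so $S_0*z\ne S_0$.

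For \ref{fact:PreserveSimplex} and \ref{fact:PreserveSimplexConv}, the single-point case is read off from \eqref{eq:DefOfStellarSubd}:
\begin{itemize}
\item a simplex $\sigma\in S$ survives if $z\notin|\sigma|$;
\item it is re-added if $z\in V(\sigma)$, by the argument used for \ref{fact:ComplexUnchanged};
\item otherwise it disappears, because every new simplex has $z$ as a vertex while $z$ is not a vertex of $\sigma$.
\end{itemize}
For longer sequences I would induct, using that simplices of $S*z_0$ contained in $|\sigma|$ are obtained by subdividing $\sigma$. Some bookkeeping is needed when a later $z_i$ equals an earlier point or a vertex.

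For \ref{fact:PreserveSimplexConv}, the single-point case holds because every simplex not in $S$ has $z$ as a vertex; induction then gives the general case. For \ref{fact:OrderChange}, I use \ref{fact:ComplexUnchanged}. Since $|S_0*\bar a*\bar b|=|S_0|$ is disjoint from $|\bar z|$, each point of $\bar z$ acts trivially at every stage. Therefore $\bar z$ can be inserted anywhere in the sequence without changing the result.
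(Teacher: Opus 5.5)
Your hesitation about (i) is justified, and you should commit to it rather than hedge: the inclusion $(S*\bar z)\rest A\subset(S*\bar z)\rest|S\rest A|$ fails, and with it (i) as stated. For a counterexample, let $S$ consist of a $1$-simplex with $|S|=[0,1]$ together with its vertices $0,1$, and take $A=[0,1/2]$ and $\bar z=(1/2)$. Then $S\rest A=\{0\}$ and $1/2\notin|S\rest A|$, so $(S\rest A)*\bar z=\{0\}$. On the other hand, $(S*\bar z)\rest A$ contains the new edge $\Delta[0,1/2]$. The paper's proof takes exactly your route (Lemma~\ref{lemma:SubdSubcompl} with $S_0=S\rest A$) and dismisses precisely this step as ``easy to see'', so it has the same hole. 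The correct statement is the one you suggest. If $A=|S'|$ for a subcomplex $S'\subset S$, then $|S\rest A|=A$, and (i) is literally Lemma~\ref{lemma:SubdSubcompl}; no extra induction is needed, since that lemma already allows finite $\bar z$. This corrected version covers every use in the paper:
\begin{itemize}
\item $A=|\sigma|=|\scl(\sigma)|$ in the proof of (iii);
\item $A=|T_{i_0}|$ for the complex $T*\bar t$ in the proof of Theorem~\ref{thm:CommonStellarSubdivision}, where $(T*\bar t)\rest|T_{i_0}|=T_{i_0}*\bar t^{j}$ by \eqref{eq:DefOfki}, so it has realization $|T_{i_0}|$.
\end{itemize}

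Elsewhere your plan is sound, and your items (iv) and (v) are the paper's arguments. For (iii) you take a different route. You read the single-point case directly off \eqref{eq:DefOfStellarSubd}, whereas the paper rewrites $\sigma\in S*z$ as $\sigma\in(S*z)\rest|\sigma|=\scl(\sigma)*z$ via (i) and then applies (ii) to $\scl(\sigma)$. Your route is more elementary and does not depend on (i), which is a real advantage given the above. Likewise, your converse in (ii) ($z$ is a vertex of $S_0*z$ but not of $S_0$) is a cleaner form of the paper's ``the removed and added sets differ''.

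The multi-point case of (iii), which the paper does not address, needs one fact that you should state explicitly instead of appealing to ``bookkeeping''. First, a stellar move never deletes a vertex: a vertex $v$ is removed only when $z=v$, and then nothing changes by (ii). Second, by Definition~\ref{def:simplicial_complex}\ref{def:simplicial_complex-2}, no complex with a vertex in $|\sigma|\setminus V(\sigma)$ contains $\sigma$. So if some $z_i\in|\sigma|\setminus V(\sigma)$, then $z_i\in V(S*\bar z)$ and hence $\sigma\notin S*\bar z$. The other direction is the single-point case iterated along $\bar z$. With this, repeated points and points that are already vertices need no separate treatment.
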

\begin{proof}
  \ref{lemma:SwapSetAndSeq} Let $S_0=S\rest A$. Then it is easy to see
  that $(S*\bar z)\rest A=(S*\bar z)\rest |S_0|$.  Applying
  Lemma~\ref{lemma:SubdSubcompl}, the latter is equal to
  $S_0*\bar z=(S\rest A)*\bar z$.
  
  \ref{fact:ComplexUnchanged} If
  $z\in |S|\setminus |S_0|$, then
  $\{\kappa\in S_0\mid z\in |\kappa|\}=\Lk_{S_0}(z)=\es$.  If
  $z\in V(S_0)$, then
  \begin{equation}
    \label{eq:SetToRemove}
    \{\kappa\in S_0\mid z\in |\kappa|\}
  \end{equation}
  is the
  same set as
  \begin{equation}
    \label{eq:SetToPutBack}
    \{\Delta[V(\lambda)\cup \{z\}]\mid \lambda\in \Lk_{S_0}(z)\}
  \end{equation}
  because $\Delta[V(\kappa)]=\kappa$ and
  $V(\lambda)\cup \{z\}=V(\kappa)$ for all $\lambda\in\Lk_{S_0}(z)$ and for all \(\kappa\) in \eqref{eq:SetToRemove}
  (checking Definitions \ref{def:DeltaInComplex}, \ref{def:Star}). In
  both cases, the direction from left to right follows trivially from
  Definition~\ref{def:StellarSubdivision}.  But if
  $z\in |S_0|\setminus V(S_0)$, then the set \eqref{eq:SetToRemove} is
  non-empty and different from \eqref{eq:SetToPutBack} which proves
  the other direction.  
  
  \ref{fact:PreserveSimplex} By
  \eqref{eq:DefOfRest} it is clear that $\sigma\in S*z$ if and only if
  $\sigma\in (S*z)\rest |\sigma|$.  By
  Lemma~\ref{lemma:Collection2}\ref{lemma:SwapSetAndSeq} the latter is
  equal to $(S\rest|\sigma|)*z=\scl(\sigma)*z$. By
  \ref{fact:ComplexUnchanged}, $\scl(\sigma)*z=\scl(\sigma)$ if and
  only if $\sigma\in\scl(\sigma)$ if and only if $z\in V(\sigma)$ or
  $z\notin |\sigma|$.  
  
  \ref{fact:PreserveSimplexConv} If
  $\sigma\in S*\bar z$ and $\sigma\notin S$, then
  $|\bar z|\cap V(\sigma)\ne\es$ as is clear, again, from
  Definitions~\ref{def:StellarSubdivision}, \ref{def:Star},
  and~\ref{def:DeltaInComplex}. 
  
  \ref{fact:OrderChange} By induction
  on lengths $l(\bar a)$, $l(\bar b)$, and $l(\bar c)$ using
  \ref{fact:ComplexUnchanged}--\ref{fact:PreserveSimplexConv} above. 
  
\end{proof}

\begin{Lemma}\label{lemma:InfSubdLocalFin}
  Suppose $S$ is a complex and $\bar z\subset |S|$ is locally finite.
  Then for every finite subcomplex $S_0\subset S$ there is $i\in\N$
  such that for all $j\ge i$, $S_0*\bar z^j=(S*\bar z)\rest |S_0|$
  (the restriction was defined in~\eqref{eq:DefOfRest}).
\end{Lemma}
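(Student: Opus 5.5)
The plan is to enlarge $S_0$ to a finite subcomplex $S_1$ that contains a neighbourhood of $|S_0|$. The points of $\bar z$ that eventually avoid $|S_1|$ then cannot affect what happens over $|S_0|$, and I will make this precise with Lemma~\ref{lemma:Collection2}\ref{fact:OrderChange}. The first step is to choose $S_1$. By Definition~\ref{def:simplicial_complex}\ref{def:simplicial_complex-3} and compactness of $|S_0|$, there is an open $U\supset |S_0|$ meeting only finitely many simplices of $S$. Put $S_1=\St_S(U)$; this is a finite subcomplex with $U\subset |S_1|$. By local finiteness and Lemma~\ref{lem:LocFinEq}\ref{lem:LocFinEq-3}, there is $i$ with $z_j\notin |S_1|$ for all $j\ge i$. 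This $i$ is the one the statement asks for.

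The second step is to show that $S_0*\bar z^j=S_0*\bar z^i$ for every $j\ge i$. Write $\bar z^j=\bar z^i\cat\bar w$ where $|\bar w|\cap|S_1|=\es$, and in particular $|\bar w|\cap|S_0|=\es$. Then Lemma~\ref{lemma:Collection2}\ref{fact:OrderChange}, applied with $\bar a=\bar z^i$, $\bar b$ empty and $\bar z=\bar w$, gives the equality. Combining this with Lemma~\ref{lemma:Collection2}\ref{lemma:SwapSetAndSeq}, we get
\[
S_0*\bar z^j=(S*\bar z^j)\rest|S_0| \quad\text{for all } j\ge i .
\]
Note that $S_0=S\rest|S_0|$ by Lemma~\ref{lemma:Collection1}\ref{fact:SubsetImpliesSimplex}. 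It therefore remains to prove that $(S*\bar z)\rest|S_0|=(S*\bar z^j)\rest|S_0|$ for $j\ge i$.

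The third step proves this equality, inclusion by inclusion. For "$\supseteq$", take $\sigma\in (S*\bar z^j)\rest |S_0|$ with $j\ge i$. Then $\sigma\in S_0*\bar z^j=S_0*\bar z^i$. By the second step applied to every $j'\ge i$, $\sigma\in (S*\bar z^{j'})\rest|S_0|$ for all $j'\ge i$, so $\sigma\in S*\bar z$ by \eqref{eq:OmegaSubd}. For "$\subseteq$", take $\sigma\in S*\bar z$ with $|\sigma|\subset|S_0|$. By definition $\sigma\in S*\bar z^{j'}$ for all large $j'$. Choosing such a $j'\ge i$ gives $\sigma\in (S*\bar z^{j'})\rest|S_0|=S_0*\bar z^{j'}=S_0*\bar z^{j}$, and hence $\sigma\in(S*\bar z^j)\rest|S_0|$ as well.

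I expect the main obstacle to be justifying the use of Lemma~\ref{lemma:Collection2}\ref{fact:OrderChange}: it needs the later points to lie outside $|S_0|$, and its proof was only sketched. If that appeal looks shaky, I would argue directly instead. Use Lemma~\ref{lemma:Collection2}\ref{fact:ComplexUnchanged} to show that a point $w\notin|S_0|$ leaves $S_0*\bar z^{i}$ unchanged, and note that $|S_0*\bar z^i|=|S_0|$. Then induct on the length of $\bar w$. This is where the neighbourhood $S_1$, or merely $|S_0|$ itself, is enough.
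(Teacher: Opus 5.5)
Your proof is correct and is essentially the paper's argument. Both choose $i$ so that $z_j\notin|S_0|$ for $j\ge i$, identify $(S*\bar z^j)\rest|S_0|$ with $S_0*\bar z^j$ (the paper via Lemma~\ref{lemma:SubdSubcompl} and Lemma~\ref{lemma:Collection1}\ref{fact:SubsetImpliesSimplex}, you via Lemma~\ref{lemma:Collection2}\ref{lemma:SwapSetAndSeq}), and use that the later points leave the subdivision over $|S_0|$ unchanged; as you note, the auxiliary $S_1$ is unnecessary. One small advantage of your version is that proving the stabilization $S_0*\bar z^j=S_0*\bar z^i$ first means the inclusion $(S*\bar z)\rest|S_0|\subset S_0*\bar z^j$ needs only one large index per $\sigma$, whereas the paper takes a maximum of indices $i_\sigma$. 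The paper states these $i_\sigma$ for $\sigma\in S_0$, where they need not exist; they are really needed for the finitely many $\sigma\in(S*\bar z)\rest|S_0|$.
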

\begin{proof}
  By local finiteness there is $i_0$ such that $z_{j_0}\notin |S_0|$
  for all $j_0\ge i_0$. On the other hand for each $\sigma \in S_0$
  there is $i_\sigma$ such that for all $j_{\sigma}\ge i_{\sigma}$,
  $\sigma\in S*\bar z^{j_{\sigma}}$. Since $S_0$ is finite, there is a
  finite upperbound for $\{i_0\}\cup \{i_\sigma\mid \sigma\in
  S_0\}$. Let $i$ be such an upper bound. Fix $j\ge i$.

  Suppose that $\sigma\in S_0*\bar z^j$.  Then by
  Lemma~\ref{lemma:SubdSubcompl}, $\sigma\in S*\bar z^j$.  By
  Lemma~\ref{lemma:Collection2}\ref{fact:PreserveSimplex} and the
  fact that $j\ge i_0$, we have that $\sigma\in S*\bar z^{j'}$ for all
  $j'\ge j$. This implies that $\sigma\in S*\bar z$.  Trivially,
  $|S_0|=|S_0*\bar z^j|$, so $|\sigma|\subset |S_0|$.  This proves the
  inclusion from left to right.

  Suppose $\sigma\in (S*\bar z)\rest |S_0|$. Then
  $|\sigma|\subset |S_0|=|S_0*\bar z^j|$.  On the other hand,
  $\sigma\in S*\bar z$ and since $j\ge i_{\sigma}$, we have
  $\sigma\in S*\bar z^{j}$.  By Lemma~\ref{lemma:SubdSubcompl},
  $S_0*\bar z^j\subset S*\bar z^j$.  Apply
  Lemma~\ref{lemma:Collection1}\ref{fact:SubsetImpliesSimplex} with
  $S=S_0*\bar z^j$, $T=S*\bar z^j$, and $\tau=\sigma$, so we have
  $\sigma\in S_0*\bar z^j$.
\end{proof}

\section{Main result}\label{sec:Main result}

In this section we give a generalization of the main theorem
of \cite{AP24}. 

\begin{Lemma}\label{lemma:ExtedingSubdivisions}
  Let $S$ and $T$ be different finite triangulations of the same space
  $X$. Suppose that $S_0$ and $S_1$ are subcomplexes of $S$, and $T_0$
  and $T_1$ are subcomplexes of $T$ such that $|S_0|=|T_0|$,
  $|S_1|=|T_1|$, $S=S_0\cup S_1$, and $T=T_0\cup T_1$.  Suppose
  further that $\bar s_0\subset |S_0|$, $\bar s_1\subset |S_1|$,
  $\bar t_0\subset |T_0|$, and $\bar t_1\subset |T_1|$ are finite
  sequences such that
  \begin{align}
    S_0*\bar s_0&=T_0*\bar t_0\label{eq:LemmaESAssumption1}\\
    S_1*\bar s_0*\bar s_1&=T_1*\bar t_0*\bar t_1.\label{eq:LemmaESAssumption2}
  \end{align}
  Then $S*\bar s_0*\bar s_1=T*\bar t_0*\bar t_1$.
\end{Lemma}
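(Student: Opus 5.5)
The plan is to show that the two finite complexes $A:=S*\bar s_0*\bar s_1$ and $B:=T*\bar t_0*\bar t_1$ agree on each of the two pieces $|S_0|=|T_0|$ and $|S_1|=|T_1|$, and then glue. Since $S=S_0\cup S_1$, every simplex of $S$ lies in $|S_0|$ or in $|S_1|$. Both $A$ and $B$ are subdivisions of $X$, and the first step is to verify that every simplex of $A$ lies in $|S_0|$ or in $|S_1|$, and likewise for $B$. Granting this, we have $A=(A\rest |S_0|)\cup (A\rest |S_1|)$ and $B=(B\rest |T_0|)\cup (B\rest |T_1|)$. It then suffices to prove $A\rest|S_0|=B\rest|T_0|$ and $A\rest|S_1|=B\rest|T_1|$.

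For the $S_1$-piece, apply Lemma~\ref{lemma:Collection2}\ref{lemma:SwapSetAndSeq} with $A=|S_1|$ and the finite sequence $\bar s_0\cat\bar s_1$. Since $S\rest|S_1|=S_1$ by Lemma~\ref{lemma:Collection1}\ref{fact:SubsetImpliesSimplex}, this gives $A\rest|S_1|=S_1*\bar s_0*\bar s_1$, and symmetrically $B\rest|T_1|=T_1*\bar t_0*\bar t_1$. These agree by \eqref{eq:LemmaESAssumption2}.

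For the $S_0$-piece, the same lemma gives $A\rest|S_0|=S_0*\bar s_0*\bar s_1$ and $B\rest|T_0|=T_0*\bar t_0*\bar t_1$. By \eqref{eq:LemmaESAssumption1}, $S_0*\bar s_0=T_0*\bar t_0=:R$, so the task reduces to $R*\bar s_1=R*\bar t_1$. Here the points of $\bar s_1$ and $\bar t_1$ outside $|S_0|$ do not matter (by Lemma~\ref{lemma:Collection2}\ref{fact:ComplexUnchanged}). Points lying in $|S_0|\cap|S_1|$ can affect $R$, however, so the key step is a restriction argument to the common part. Put $C=|S_0|\cap|S_1|$. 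Restricting \eqref{eq:LemmaESAssumption2} to $C$ and using Lemma~\ref{lemma:Collection2}\ref{lemma:SwapSetAndSeq} yields $(S_1\rest C)*\bar s_0*\bar s_1=(T_1\rest C)*\bar t_0*\bar t_1$. Restricting \eqref{eq:LemmaESAssumption1} to $C$ gives $(S_0\rest C)*\bar s_0=(T_0\rest C)*\bar t_0$, and we have $S_0\rest C=S_1\rest C=S\rest C$ (similarly for $T$). Hence the common subcomplex $R\rest C$, stellarly subdivided by $\bar s_1$, equals the same complex subdivided by $\bar t_1$.

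The step I expect to be the main obstacle is lifting this equality on $C$ to all of $R$: that is, showing that a stellar subdivision of $R$ at a point $z\in C$ is determined by what happens on $R\rest C$ together with the star of $z$ in $R$. Since $\Lk_R(z)$ includes simplices outside $C$, I would argue inductively along $\bar s_1$ and $\bar t_1$ separately. The simplices of $R*\bar s_1$ not contained in $C$ are cones from vertices in $C$ over simplices, with new faces obtained by joining $R*\bar s_1\rest C$ faces to old outer vertices. In other words, I would try to describe $R*\bar s_1$ as $R$ with each simplex $\kappa$ meeting $C$ in a face $\kappa_C$ replaced by the joins of the subdivision of $\kappa_C$ with the opposite face (when $C$ is a full subcomplex of the relevant simplices). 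This gives a formula depending only on $(R\rest C)*\bar s_1$, hence on $(R\rest C)*\bar t_1$.

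If the fullness needed for this description fails, I would first apply the derived-neighborhood trick implicitly. Alternatively, I would use the first step's observation more carefully: check whether $A\rest|S_0|$ can instead be computed through $A$'s decomposition, using that $A$ and $B$ are complexes agreeing on $|S_1|$ (a closed set) and on $C$, and that outside $|S_1|$ only simplices with vertices in $|S_0|$ appear. In that case, applying Lemma~\ref{lemma:Collection1}\ref{fact:SubsetImpliesSimplex} inside $A$ and $B$ finishes the gluing.
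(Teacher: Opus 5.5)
\textbf{The decisive step in your plan fails, and the statement itself is false.} Your reductions are sound. Every simplex of $S*\bar s_0*\bar s_1$ lies in a simplex of $S=S_0\cup S_1$. The parts over $|S_1|$ agree by Lemma~\ref{lemma:Collection2}\ref{lemma:SwapSetAndSeq} and \eqref{eq:LemmaESAssumption2}. Put $R=S_0*\bar s_0=T_0*\bar t_0$ and $C=|S_0|\cap|S_1|$. Then the lemma is equivalent to $R*\bar s_1=R*\bar t_1$, whereas the hypotheses only give $(R\rest C)*\bar s_1=(R\rest C)*\bar t_1$.

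The gap is exactly the step you flag, and it cannot be closed. $R*\bar s_1$ is not determined by $(R\rest C)*\bar s_1$ once some simplex of $R$ meets $C$ in more than one face. When every $\kappa\in R$ meets $C$ in a single face $\kappa_C$, your cone description is correct: $\scl(\kappa)*\bar s_1$ is the join of $\scl(\kappa_C)*\bar s_1$ with the opposite face, and your argument goes through. Your fallbacks do not help. A derived neighbourhood changes the given complexes, and two triangulations of $|S_0|$ that agree on $C$ need not coincide.

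Here is a counterexample to the lemma. In $\R^2$ let $q=(0,0)$, $p=(0,1)$, $r=(1,0)$, $a=(-1,0)$, $b=(0,-1)$.
\begin{enumerate-(i)}
\item Let $S$ consist of the triangles $\kappa=\Delta[p,q,r]$, $\Delta[p,q,a]$, $\Delta[q,a,b]$, $\Delta[q,r,b]$ with their faces.
\item Let $S_0=T_0=\scl(\kappa)$ and let $S_1$ be the other three triangles with their faces.
\item Let $c$ be an interior point of $\Delta[q,a,b]$, and set $T=S*c\ne S$ and $T_1=S_1*c$.
\item Let $x$ and $y$ be interior points of the edges $\Delta[p,q]$ and $\Delta[q,r]$.
\item Set $\bar s_0=\bar t_0=\es$, $\bar s_1=(c,x,y)$ and $\bar t_1=(y,x)$.
\end{enumerate-(i)}
In $S_1*c$ the move at $x$ only replaces $\Delta[p,q]$ and $\Delta[p,q,a]$, and the move at $y$ only replaces $\Delta[q,r]$ and $\Delta[q,r,b]$. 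Neither move alters the star or link of the other point, so $S_1*c*x*y=S_1*c*y*x$ and all hypotheses hold.

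By Lemma~\ref{lemma:SubdSubcompl}, the restrictions of $S*\bar s_1$ and $T*\bar t_1$ to $|\kappa|$ are $\scl(\kappa)*x*y$ and $\scl(\kappa)*y*x$. The first contains $\Delta[x,r]$ and the second contains $\Delta[p,y]$. These two edges cross in their interiors, so $S*\bar s_1\ne T*\bar t_1$. Here $\kappa$ meets $C=|\Delta[p,q]|\cup|\Delta[q,r]|$ in two edges, which is precisely the non-full situation.

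The paper's proof has the same hole. It asserts that $\sigma'*\bar s_1$ is a subcomplex of $T*\bar t_1$, but Lemma~\ref{lemma:SubdSubcompl} only yields $T*\bar s_1$, and $\sigma'=\kappa$ above defeats it. An extra hypothesis is needed, for example:
\begin{enumerate-(i)}
\item your fullness condition, or
\item $\bar s_1\cap C=\bar t_1\cap C$ as sequences; then $R*\bar s_1=R*(\bar s_1\cap C)=R*\bar t_1$.
\end{enumerate-(i)}
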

\begin{proof}
  We will prove the statement first for $\bar s_0=\bar t_0=\es$.  The
  assumptions of the lemma now can be re-written as follows:
  \begin{align}
    S_0&=T_0\label{eq:LemmaESAssumption11}\\
    S_1*\bar s_1&=T_1*\bar t_1,\label{eq:LemmaESAssumption22}
  \end{align}
  and the conclusion as $S*\bar s_1=T*\bar t_1$.  Suppose
  $\sigma\in S*\bar s_1$. We will show that then
  $\sigma\in T*\bar t_1$. This, by symmetry, will imply the claim.
  By Lemma~\ref{lemma:SubdSubcompl},
  $S*\bar s_1=(S_0*\bar s_1)\cup (S_1*\bar s_1)$.  If
  $\sigma\in S_1*\bar s_1=T_1*\bar t_1\subset T*\bar t_1$, then we are
  done.  So assume that $\sigma\notin S_1*\bar s_1$ and so
  $\sigma\in S_0*\bar s_1$. Since
  $S_0*\bar s_1$ is a subdivision of $S_0$, there is $\sigma'\in S_0$
  such that $|\sigma|\subset |\sigma'|$ and
  $\sigma\in \sigma'*\bar s_1$ by Lemma \ref{lemma:SubdSubcompl}. But
  $\sigma'\in T_0\subset T$, so $\sigma'*\bar s_1$ is also a
  subcomplex of $T*\bar t_1$ (also by Lemma \ref{lemma:SubdSubcompl}), so
  $\sigma\in T*\bar t_1$.

  Now suppose $\bar s_0\ne\es$ or $t_0\ne\es$. Define $S'=S*\bar s_0$,
  $T'=T*\bar t_0$, $S'_0=S_0*\bar s_0$, $T'_0=T_0*\bar t_0$, $S_1'=S_1*\bar s_0$,
  and $T_1'=T_1*\bar t_0$. By Lemma~\ref{lemma:SubdSubcompl}
  we have $S'=S'_0\cup S'_1$ and
  $T'=T'_0\cup T'_1$, so we can apply the first part
  of the proof with $S_0=S_0'$, $T_0=T_0'$,
  $S_1=S'_1$, and $T_1=T_1'$.
\end{proof}

\begin{thm}[Common stellar subdivision for infinite complexes]\label{thm:CommonStellarSubdivision}
  Suppose that two (possibly infinite) triangulations $S$ and $T$ of the same space $X$
  have a common subdivision. Then there are locally finite sequences
  $\bar s,\bar t\subset X$ such that $S*\bar s=T*\bar t$.
\end{thm}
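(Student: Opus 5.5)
We reduce to the finite theorem of Adiprasito and Pak \cite{AP24} by exhausting $X$ with finite subcomplexes and building $\bar s$ and $\bar t$ as unions of end-extending finite sequences. This is exactly what Lemma~\ref{lemma:ExtedingSubdivisions} is designed for.

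\textbf{Step 1: a compatible exhaustion.} Let $R$ be a common subdivision of $S$ and $T$. Using local finiteness (Definition~\ref{def:simplicial_complex}\ref{def:simplicial_complex-3}) and countability, we choose finite subcomplexes $S^0\subset S^1\subset\cdots$ of $S$ and $T^0\subset T^1\subset\cdots$ of $T$ with $\Cup_n S^n=S$ and $\Cup_n T^n=T$. The difficulty is that we want $|S^n|=|T^n|$, which need not hold for subcomplexes chosen independently. We therefore do not insist on equal realizations. Instead we work with the pieces $S^{n}$ and the sets $\St_T(|S^n|)$, and use the subdivision $R$ to guarantee that the intermediate stellar subdivisions agree on a common region.

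Concretely, we first apply finitely many stellar moves so that the two complexes share a subcomplex with realization $|S^n|$. For this we take $P_n=R\rest|S^n|$, a finite subcomplex of $R$, and the finite complex $\St_T(|S^n|)\rest$ of its subdivision. We apply \cite{AP24} to the finite complexes $\St_S(\cdot)$ and $\St_T(\cdot)$ restricted to the common realization $|R\rest K_n|$. Here $K_n$ is chosen so that both $S\rest K_n$ and $T\rest K_n$ triangulate $K_n$, for example $K_n$ a union of $R$-simplices that is simultaneously a union of $S$-simplices and $T$-simplices. Such $K_n$ exist after an initial finite stellar subdivision of both sides, since by \cite{AP24} finite pieces with a common subdivision have common stellar subdivisions.

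\textbf{Step 2: induction.} Suppose finite sequences $\bar s^n\subset|S|$ and $\bar t^n\subset|T|$ have been built with $(S*\bar s^n)\rest|K_n|=(T*\bar t^n)\rest|K_n|$. Set $S_0=(S*\bar s^n)\rest|K_n|$, and let $S_1$ be the finite subcomplex covering $\overline{|K_{n+1}|\setminus|K_n|}$; define $T_0,T_1$ likewise. The complexes $S_1*\bar s^n$ and $T_1*\bar t^n$ triangulate the same compact set and have a common subdivision, namely a suitable refinement of $R$. By \cite{AP24} there are finite $\bar s_1\subset|S_1|$ and $\bar t_1\subset|T_1|$ with $S_1*\bar s^n*\bar s_1=T_1*\bar t^n*\bar t_1$. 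Lemma~\ref{lemma:ExtedingSubdivisions}, applied to the finite complexes over $|K_{n+1}|$ with Lemma~\ref{lemma:SubdSubcompl} used to pass between restrictions, then gives agreement on $|K_{n+1}|$ with $\bar s^{n+1}=\bar s^n\cat\bar s_1$ and $\bar t^{n+1}=\bar t^n\cat\bar t_1$.

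\textbf{Step 3: local finiteness and the limit.} The crucial extra requirement is that the new points not disturb what has already been fixed. We want the new points to avoid $|K_{n-1}|$, and ideally to lie in the closure of $|K_{n+1}|\setminus|K_n|$, so that $K_{n-1}$-simplices of $S*\bar s^n$ are never subdivided again. This is the step I expect to be the main obstacle. In \cite{AP24} the stellar points for the finite piece $S_1$ may lie on $|S_0|\cap|S_1|$, and such points would alter the already agreeing part.

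To handle it, I would choose the exhaustion so that consecutive regions are separated by a collar: take $K_{n+1}$ containing a neighbourhood of $|K_n|$, and apply the finite result to a region disjoint from $|K_{n-1}|$. Then Lemma~\ref{lemma:Collection2}\ref{fact:OrderChange} and \ref{fact:PreserveSimplex} show that simplices inside $|K_{n-1}|$ are frozen. Consequently $\bar s=\Cup_n\bar s^n$ and $\bar t=\Cup_n\bar t^n$ meet each compact set only finitely often, so they are locally finite by Lemma~\ref{lem:LocFinEq}. Finally, Lemma~\ref{lemma:InfSubdLocalFin} identifies $(S*\bar s)\rest|K_{n-1}|$ with $(S*\bar s^{j})\rest|K_{n-1}|$ for large $j$, and similarly for $T$. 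Hence $S*\bar s$ and $T*\bar t$ agree on every $|K_{n}|$, and thus $S*\bar s=T*\bar t$.
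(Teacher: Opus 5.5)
Your Step~1 is where the argument breaks down. You need compact sets $K_n$ exhausting $X$ that are simultaneously realizations of finite subcomplexes of (stellar subdivisions of) $S$ and of $T$. Step~2 needs the same for $\overline{|K_{n+1}|\setminus|K_n|}$ in order to get $|S_1|=|T_1|$. Such sets need not exist. On $X=\mathbb{R}$ let $S$ have vertex set $\mathbb{Z}$ and $T$ vertex set $\mathbb{Z}+\frac12$: no nonempty finite subcomplexes of $S$ and $T$ have the same realization. Finitely many stellar moves change a complex only inside a compact set, so no initial finite subdivision yields $K_n$ for all $n$. Producing them stage by stage via \cite{AP24} is circular, because \cite{AP24} needs two finite complexes with the same realization, which is exactly what is missing.

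The missing idea is the paper's reduction to the case $S\subd T$ (Claim~\ref{claim:CommonStellar}). There $S\rest|T_i|$ triangulates $|T_i|$ for every finite subcomplex $T_i\subset T$, so the exhaustion comes for free. Your pair $P_n=R\rest|S^n|$ and $S^n$ is of this kind, but you do not pursue it. The general case then follows by applying the claim twice: to $Z\subd T$, giving $Z*\bar z'=T*\bar t$, and to $Z*\bar z'\subd S$, giving $(T*\bar t)*\bar z=S*\bar s$. Since $\bar t$ may be infinite, $\bar t\cat\bar z$ is not an $\omega$-sequence. The paper therefore interleaves blocks of $\bar t$ with single terms of $\bar z$ into one locally finite $\bar w$ and proves $T*\bar w=(T*\bar t)*\bar z$ (Claims~\ref{claim:wtz_order} and~\ref{claim:wtz_order2}). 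Neither the two-step reduction nor this merging appears in your plan.

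You are right that points returned by \cite{AP24} on the interface are delicate, but the collar does not dispose of them. It keeps the stage-$(n+1)$ points off $|K_{n-1}|$, not off $\partial|K_n|$, and points there re-subdivide simplices of the already matched region, possibly differently on the two sides. Nothing prevents the following: a common triangle $abc$ of $(S*\bar s^n)\rest|K_n|=(T*\bar t^n)\rest|K_n|$ with $ab$ and $bc$ on the interface, together with $\bar s_1=(z,y)$ and $\bar t_1=(y,z)$, where $z$ is interior to $ab$ and $y$ interior to $bc$. The interface is then subdivided identically on both sides, yet $abc$ is cut along $zc$ on one side and along $ya$ on the other. So agreement on $|K_{n+1}|$ does not follow.

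Lemma~\ref{lemma:ExtedingSubdivisions} cannot supply it either. Its proof tacitly identifies $\scl(\sigma')*\bar s_1$ with $\scl(\sigma')*\bar t_1$ for $\sigma'\in S_0=T_0$. The same configuration satisfies its hypotheses but not its conclusion: take $S_0=T_0=\scl(abc)$, $S_1=\scl(ab)\cup\scl(bc)\cup\scl(acd)$, $T_1=S_1*e$ with $e$ interior to $acd$, $\bar s_1=(z,y,e)$ and $\bar t_1=(y,z)$. Claim~\ref{claim:CommonStellar} uses the lemma in the same way, so on either route this step needs an additional ingredient. One possibility is a relative form of \cite{AP24} that places no new points where the two complexes already agree.
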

\begin{proof}
  This was proved in \cite{AP24} for finite
  complexes $S$ and $T$ in which case the sequences
  $\bar s$ and $\bar t$ are finite too. So we assume that
  $S$ and $T$ are infinite. 
  We will begin by treating the case $S\subd T$:
  \begin{claim}\label{claim:CommonStellar}
    Suppose $S\subd T$. Then there are $\bar s$ and $\bar t$
    such that $S*\bar s=T*\bar t$.
  \end{claim}
  \begin{proof}
    Let $T_0, T_1,\cdots$ be a locally finite sequence of finite
    subcomplexes of $T$ such that $T=\Cup_{i\in\N}T_i$ and such that
    $T_0$ is a single vertex in $T$ (so also in $S$).  For example,
    obtain this sequence by setting $T_0=\{v_0\}$, with
    $v_0 \in V(T)$, and $T_i=\St_T(v_i)$ for $i\ge 1$, where
    $(v_i)_{i\in\N}$ is an enumeration of $V(T)$. Let
    $\hat T_i=\Cup_{j\le i}T_j$.  Let $S_i:=S\rest T_i$ and
    $\hat S_i=S\rest \hat T_i$.

    By induction on $i$ we will find sequences $\bar s^i$ and
    $\bar t^i$ such that $\hat S_i*\bar s^i=\hat T_i*\bar t^i$.
    Let $\bar s^0=\bar t^0$ be the empty
    sequence. So $\hat S_0=\{v_0\}= \hat T_0$ and
    $\hat S_0*\bar s^0= \hat T_0*\bar t^0$.  Note that since $S$ is a
    subdivision of $T$, it follows that $|S_i|=|T_i|$ and
    $|\hat S_i|=|\hat T_i|$.  Suppose now that $\bar s^i$ and
    $\bar t^i$ have been defined such that
    $\bar s^i,\bar t^i\subset |\hat T_i|$ and
    $\hat S_i*\bar s^i=\hat T_i*\bar t^i$.
    By
    \cite{AP24} there are $\bar s$ and $\bar t$ such that
    $S_{i+1}*\bar s^i*\bar s=T_{i+1}*\bar t^i*\bar t$.
    We can
    now apply Lemma~\ref{lemma:ExtedingSubdivisions} with
    $S_0=\hat S_{i}$, $S_1=S_{i+1}$, $S=\hat S_{i+1}$, $T_0=\hat T_i$,
    $T_1=T_{i+1}$, $T=\hat T_{i+1}$, $\bar s_0=\bar s^{i}$,
    $\bar s_1=\bar s$, $\bar t_0=\bar t^{i}$, and $\bar t_1=\bar t$.
    Then we have
    $$\hat S_{i+1}*\bar s^i*\bar s = \hat S_{i+1}*\bar t^i*\bar t.$$
    Let $\bar s^{i+1}:=\bar s^i\cat \bar s$ and
    $\bar t^{i+1}\cat \bar t$.  At the limit we set
    $\bar s=\Cup_{i\in\N}\bar s^i$ and $\bar t=\Cup_{i\in\N}\bar t^i$.
    Let us show that now $S*\bar s=T*\bar t$.  Suppose
    $\sigma\in S*\bar s$. Then by \eqref{eq:OmegaSubd}, for some
    $i\in\N$, $\sigma\in S*\bar s^j$ for all $j>i$.  Let $i'>i$ be so
    large that $|\sigma|\subset \hat S_{i'}$ and for all $j>i'$,
    $\bar s^j\cap \hat S_{i'}=\bar s^{i'}\cap \hat S_{i'}$.  This is
    possible by the construction. Now $\sigma\in \hat S_{i'}*\bar s^j$
    for all $j>i'$ and so by Lemma~\ref{lemma:SubdSubcompl}, also
    $\sigma\in \hat S_j*\bar s^j=\hat T_j*\bar t^j\subset T*\bar t^j$
    for all $j>i'$. Therefore $\sigma\in T*\bar t$. The reverse inclusion follows
    symmetrically.
  \end{proof}
  Now suppose $S$ and $T$ are any triangulations which have a common
  subdivision $Z$.
  By the claim applied to \(Z \subd T\), let
  $\bar z'$ and $\bar t$ be such that $Z*\bar z'=T*\bar t$. Then
  \(Z *\bar z' \subd Z\), so by transitivity, also
  \(Z *\bar z' \subd S\). Now apply the claim to the latter pair to
  get $\bar z$ and $\bar s$ such that $(Z*\bar z')*\bar z=S*\bar
  s$. We now have
  \begin{equation}
    (T*\bar t)*\bar z=S*\bar s.\label{eq:omegapomegaeq} 
  \end{equation}
  There is only one problem. The sequence $\bar t$ can be infinite, so
  the concatenation $\bar t\cat \bar z$ would be a sequence of length
  $\o+\o$. So instead, we have to ``concatenate'' them in a different
  way.  Let $T_0\subset T_1\subset\cdots$ be an increasing sequence of
  finite subcomplexes of $T$ such that $T=\Cup_{i\in\N}T_i$,
  $z_0\in T_0$, and
  $z_{i+1}\in |T_{i}|\setminus \St_{T}(|T|\setminus |T_{i}|)$ for
  all $i\in\N$. The last condition is essentially saying that $z_{i+1}$
  is in $|T_{i}|$ but not ``close to the boundary of $|T_{i}|$''. For
  each $i\in\N$, let $k(i)\in \N$ be such that for all
  $j\ge k(i)$, $z_j\notin |T_i|$, $t_j\notin |T_i|$, and (in particular)
  \begin{equation}
    \label{eq:DefOfki}
    T_i*\bar t^j=(T*\bar t) \rest T_i.
  \end{equation}
  Such $k(i)$ exists for all $i$ by local finiteness of $\bar z$ and
  $\bar t$ and Lemma~\ref{lemma:InfSubdLocalFin}.  W.l.o.g. assume
  that for all $i$, $k(i)>i$ and for all $i<i'$, $k(i)<k(i')$.  Then
  for all $i<j$ define $\bar t^{i,j}:=(t_{i},\ldots, t_{j-1})$ and let
  $$\bar w:=\bar t^{0,k(0)}\cat (z_0)\cat \bar t^{k(0),k(1)}\cat (z_1)\cat \bar t^{k(1),k(2)}\cat (z_2)\cat \cdots.$$
  By \eqref{eq:omegapomegaeq}, it is now enough to show that
  $(T*\bar t)*\bar z=T*\bar w$. First we need a claim.
  \begin{claim}\label{claim:wtz_order}
    For all $i$, $T_i*\bar t^{k(i)}*\bar z^{i}=T_i*\bar w^{k(i)+i}$.
  \end{claim}
  \begin{proof}
    We will prove this by induction on $i$. If $i=0$, then
    $\bar z^{0}=\es$ and $\bar t^{k(0)}=\bar w^{k(0)}=\bar w^{k(0)+0}$,
    so the claim is trivial. Suppose we have proved it for $i$
    and we will now prove it for $i+1$:
    $$T_{i+1}*\bar t^{k(i+1)}*\bar z^{i+1}=T_{i+1}*\bar w^{k(i+1)+i+1}.$$
    Note that from the induction hpothesis, it follows from Lemma~\ref{lemma:SubdSubcompl},
    that for any subcomplex $S\subset T_{i+1}$ we have
    \begin{equation}
      S*\bar t^{k(i)}*\bar z^{i}=S*\bar w^{k(i)+i}.\label{eq:AnySubdivision}    
    \end{equation}
    Suppose $\sigma\in T_{i+1}*\bar t^{k(i+1)}*\bar z^{i+1}$.  Let
    $\sigma'\in T_{i+1}$ be such that $|\sigma|\subset |\sigma'|$.  In
    particular $\sigma\in \scl(\sigma')*\bar t^{k(i+1)}*\bar z^{i+1}$ by
    Lemma~\ref{lemma:SubdSubcompl} (here $\scl(\sigma')$ is the
    simplicial closure of $\sigma'$ from
    Definition~\ref{def:simplicial_complex}, i.e. $\sigma'$ together
    with all its faces).  If $\sigma'\in T_i$, then
    $|\bar t^{k(i),k(i+1)}|\cap |\sigma'|=\es$, so by
    Lemma~\ref{lemma:Collection2}\ref{fact:OrderChange} and by
    \eqref{eq:AnySubdivision} we have
    \begin{align*}
      \scl(\sigma')*\bar t^{k(i+1)}*\bar z^{i+1}&=\scl(\sigma')*\bar t^{k(i)}*\bar t^{k(i),k(i+1)}*\bar z^{i}*z_i\\
    &=\scl(\sigma')*\bar t^{k(i)}*\bar z^{i}*z_i*\bar t^{k(i),k(i+1)}\\
    &=\scl(\sigma')*\bar w^{k(i)+i}*z_i*t^{k(i),k(i+1)}\\
    &=\scl(\sigma')*\bar w^{k(i+1)+i+1}\\
    &\subset T_{i+1}*\bar w^{k(i+1)+i+1}
    \end{align*}
    implying $\sigma\in T_{i+1}*\bar w^{k(i+1)+i+1}$.  Otherwise,
    $\sigma'\notin T_i$ and so
    $\sigma'\subset \St_T(|T_{i+1}|\setminus |T_i|)$.  So we can safely
    assume that $|\bar z^{i+1}|\cap |\sigma'|=\es$. So applying
    Lemma~\ref{lemma:Collection2}\ref{fact:OrderChange} in a slightly
    different way, we get the same derivation as above.  Now suppose
    conversely that
    $\sigma\in T_{i+1}*\bar w^{k(i+1)+i+1}=T_{i+1}*\bar
    w^{k(i)+i}*z_i*\bar t^{k(i),k(i+1)}$.  But then the same argument
    as above (using the induction hypothesis and Lemma
    \ref{lemma:Collection2}\ref{fact:OrderChange}) in a reverse way
    yields $\sigma\in T_{i+1}*\bar t^{k(i+1)}*\bar z^{i+1}$ so we are
    done.
  \end{proof}
  
  \begin{claim}\label{claim:wtz_order2}
    Suppose $i_1\ge i_0$. Then $T_{i_0}*\bar t^{k(i_1)}*\bar z^{i_1}=T_{i_0}*\bar w^{k(i_1)+i_1}$.
  \end{claim}
  \begin{proof}
    Let $\sigma\in T_{i_0}*\bar t^{k(i_1)}*\bar z^{i_1}$.
    Since $T_{i_0}$ is a subcomplex of $T_{i_1}$, by Lemma~\ref{lemma:InfSubdLocalFin}
    and Claim \ref{claim:wtz_order},
    $$T_{i_0}*\bar t^{k(i_1)}*\bar z^{i_1}\subset T_{i_1}*\bar t^{k(i_1)}*\bar z^{i_1}=T_{i_1}*\bar w^{k(i_1)+i_1},$$
    so $\sigma\in T_{i_1}*\bar w^{k(i_1)+i_1}$. Again applying
    Lemma~\ref{lemma:InfSubdLocalFin} we know that
    $T_{i_0}*\bar w^{k(i_1)+i_1}$ is a subcomplex of
    $T_{i_1}*\bar w^{k(i_1)+i_1}$ and we also know that
    $|\sigma|\subset |T_{i_0}|=|T_{i_0}*\bar w^{k(i_1)+i_1}|$. Apply
    Lemma~\ref{lemma:Collection1}\ref{fact:SubsetImpliesSimplex} with
    $S=T_{i_0}*\bar w^{k(i_1)+i_1}$, $T=T_{i_1}*\bar w^{k(i_1)+i_1}$, and
    $\tau=\sigma$. Thus, $\sigma\in T_{i_0}*\bar w^{k(i_1)+i_1}$.  This
    proves inclusion from left to right. The reverse inclusion follows
    by a symmetric argument.
  \end{proof}
  Now suppose $\sigma\in (T*\bar t)*\bar z$. We want to show that
  $\sigma\in T*\bar w$. We need to find $i\in\N$ such that for all
  $j>i$, $\sigma\in T*\bar w^{j}$. Let $i_0$ be so large that
  $|\sigma|\subset |T_{i_0}|$ and for all $j\ge i_0$,
  $\sigma\in (T*\bar t)*\bar z^j$.
  Let $i_1=k(i_0)$ and $i=k(i_1)+i_1$. Now
  \begin{equation}
    \label{eq:WhatIswi}
    \bar w^{i}=\bar t^{0,k(0)}\cat (z_0)\cat \bar t^{k(0),k(1)}\cat (z_1)\cat \bar t^{k(1),k(2)}\cat (z_2)\cat \cdots\cat (z_{i_1-1})\cat \bar t^{k(i_1-1),k(i_1)}.
  \end{equation}
  Suppose $j\ge i$. Then by \eqref{eq:WhatIswi}, every
  $w\in |\bar w^{j}|\setminus |\bar w^{i}|$ either equals to $t_{l}$
  for $l\ge i>k(i_1)>k(i_0)$ or to $z_l$ for $l\ge i_1=k(i_0)$, so $w$
  is outside of $T_{i_0}$ and so by
  Lemma~\ref{lemma:Collection2}\ref{fact:PreserveSimplex}, if
  $\sigma\in T_{i_0}*\bar w^{i}$, then
  $\sigma\in T_{i_0}*\bar w^{j}\subset T*\bar w^{j}$ where the last
  inclusion follows from Lemma~\ref{lemma:SubdSubcompl}.  Thus, it is
  enough to show that
  $\sigma\in T_{i_0}*\bar w^{i}=T_{i_0}*\bar w^{k(i_1)+i_1}$.  Since
  $i_1=k(i_0)>i_0$, we have by the definition of $i_0$ that
  $\sigma\in (T*\bar t)*\bar z^{i_1}$.  But also
  $|\sigma|\subset |T_{i_0}|$, so
  \begin{align*}
    \sigma\in \left((T*\bar t)*\bar z^{i_1}\right)\rest |T_{i_0}|&=\left((T*\bar t)\rest |T_{i_0}|\right)*\bar z^{i_1}&&\text{ by Lemma~\ref{lemma:Collection2}\ref{lemma:SwapSetAndSeq}}\\
    &=\left(T_{i_0}*\bar t^{k(i_1)}\right)*\bar z^{i_1}&&\text{ by \eqref{eq:DefOfki}}\\
    &=T_{i_0}*\bar t^{k(i_1)}*\bar z^{i_1}\\
    &=T_{i_0}*w^{k(i_1)+i_1}&&\text{ by Claim~\ref{claim:wtz_order2}}.
  \end{align*}
  In the second equality above, we can indeed apply \eqref{eq:DefOfki}
  because $k(i_1)>i_1=k(i_0)$. This completes the proof of
  $(T*\bar t)*\bar z\subset T*\bar w$.  By
  Lemma~\ref{lemma:Collection1}\ref{fact:SubcomplexEquals},
  $(T*\bar t)*\bar z=T*\bar w$.
\end{proof}

\bibliographystyle{alpha}
\bibliography{bibliography}

\end{document}